\newtheorem{theorem}{Theorem}[section]
\newtheorem{lemma}[theorem]{Lemma}
\newtheorem{proposition}[theorem]{Proposition}
\newtheorem{corollary}[theorem]{Corollary}
\newtheorem{question}[theorem]{Question}
\theoremstyle{definition}
\newtheorem{remark}[theorem]{Remark}
\newenvironment{proofof}[1]{\noindent{\it Proof of
#1.}}{\hfill$\square$\\\mbox{}}
\def\mz{{\mathbb{Z}}}
\def\mn{{\mathbb{N}}}
\def\field{{\mathbb{F}}}
\def\support{{\mathrm{supp}}}
\def\sepbeta{\beta_{\mathrm{sep}}}
\def\rank{\mathrm{rank}}
\begin{document}

\title[Degree bound for separating invariants of abelian groups]
{Degree bound for separating invariants of abelian groups}

\author[M. Domokos]
{M. Domokos}
\address{MTA Alfr\'ed R\'enyi Institute of Mathematics,
Re\'altanoda utca 13-15, 1053 Budapest, Hungary}
\email{domokos.matyas@renyi.mta.hu}

\thanks{This research was partially supported by OTKA K101515.}

\subjclass[2010]{13A50, 11B75, 20K01.}

\keywords{Separating invariants, zero-sum sequences, Noether number, Davenport constant}

\begin{abstract} It is proved that the universal degree bound for separating polynomial invariants of a finite abelian group (in non-modular characteristic) is strictly smaller than the universal degree bound for generators of polynomial invariants, unless the goup is cyclic or is the direct product of $r$ even order cyclic groups where the number of two-element direct factors is not less than the integer part of the half of $r$. A characterization of separating sets of monomials is given in terms of zero-sum sequences over  abelian groups. 
\end{abstract}

\maketitle


\section{Introduction}\label{sec:intro}

Let $G$ be a finite group and $\field$ an algebraically closed field. A {\it $G$-module} is a finite dimensional $\field$-vector space $V$ endowed with an action of $G$ on $V$ 
via linear transformations. In other words, the $G$-module consists of the pair $(V,\rho)$ where $\rho$ is a group homomorphism $G\to GL(V)$. 
The coordinate ring $\mathcal{O}(V)$ of $V$ contains the subalgebra 
\[\mathcal{O}(V)^G:=\{f\in \mathcal{O}(V)\colon f(gv)=f(v) \quad \forall v\in V,\forall g\in G\}\] 
of $G$-invariants. 
For $f\in \mathcal{O}(V)$ and $g\in G$ write $g\cdot f\in \mathcal{O}(V)$ for the function $v\mapsto f(g^{-1}v)$. This way we get an action of $G$ on $\mathcal{O}(V)$ via 
$\field$-algebra automorphisms, and $\mathcal{O}(V)^G=\{f\in \mathcal{O}(V)\colon g\cdot f=f\quad \forall g\in G\}$. 
Choosing a basis $x_1,\dots,x_k$ in the dual space $V^*$ of $V$, the coordinate ring $\mathcal{O}(V)$ is identified with the polynomial algebra $\field[x_1,\dots,x_k]$, 
on which $G$ acts via linear substitutions of the variables.  

Following Definition 2.3.8 in \cite{derksen-kemper}, we call subset $S\subset \mathcal{O}(V)^G$  a {\it separating set of invariants} if whenever for $v,w\in V$ we have 
$f(v)= f(w)$ for all $f\in S$, then 
$h(v)=h(w)$ for all $h\in \mathcal{O}(V)^G$. Clearly if $v$ and $w$ belong to the same $G$-orbit in $V$, then $h(v)=h(w)$ holds for all $h\in\mathcal{O}(V)^G$. 
It is well known that the finiteness of  $G$ implies the converse as well: if $v$ and $w$ have different $G$-orbits, then there exists an $h\in \mathcal{O}(V)^G$ with $h(v)\neq h(w)$. So $S\subset O(V)^G$ is a separating set if and only if for any $v,w\in V$ with $Gv\neq Gw$ there is an $f\in S$ such that $f(v)\neq f(w)$. 
For a survey on separating sets of invariants see \cite{kemper}.

Since the $G$-action preserves the standard grading on $\field[x_1,\dots,x_k]$, the algebra $\mathcal{O}(V)^G$ is generated by homogeneous elements. 
Write $\beta(G,V)$ (respectively $\sepbeta(G,V)$) for the minimal positive integer $k$ such that $\mathcal{O}(V)^G$ contains a generating set (respectively separating set)  consisting of homogeneous elements of degree at most $k$. 
Moreover, set 
\[\beta(G):=\sup_V\{\beta(G,V)\}\quad \mbox{ and } \quad \sepbeta(G):=\sup_V\{\sepbeta(G,V)\}\] 
where the supremum above is taken over all $G$-modules $V$. 
The number $\sepbeta(G)$ was introduced and studied in \cite{kohls-kraft}, inspired by the number $\beta(G)$ first appearing in \cite{schmid}. 
Obviously $\sepbeta(G,V)\le \beta(G,V)$ and hence $\sepbeta(G)\le \beta(G)$. 
When the characteristic of $\field$ does not divide the group order $|G|$, we have $\beta(G)\le |G|$ (see \cite{noether} for $\mathrm{char}(\field)=0$ 
and \cite{fleischmann},\cite{fogarty} for positive non-modular characteristic). 
One nice feature of $\sepbeta(G)$ is that the inequality $\sepbeta(G)\le |G|$ holds also in the modular case $\mathrm{char}(\field)\mid |G|$ as well, see 
Corollary 3.9.14 in \cite{derksen-kemper}. In comparison we mention that when $\mathrm{char}(\field)$ divides $|G|$ we have $\beta(G)=\infty$ by \cite{richman}.  
However, as far as we know, not much is said in the literature about the following question: 
\begin{question}\label{question:strict} 
Is $\sepbeta(G)$ typicallly strictly smaller than $\beta(G)$ in the non-modular case $\mathrm{char}(\field)\nmid |G|$? 
\end{question} 
A difficulty in answering Question~\ref{question:strict} is that the exact value of the Noether number is known only for a very limited class of groups, see for example 
\cite{CD2}, \cite{CD3}, \cite{cziszter-domokos-geroldinger}. 
It is shown in \cite{cziszter} that for the non-abelian semidirect product $C_p\rtimes C_3$ (where $p$ is a prime) and $\mathrm{char}(\field)=0$ we have 
$\beta(C_p\rtimes C_3)=p+2$ whereas $\sepbeta(C_p\rtimes C_3)=p+1$. 

In the present paper we shall deal with abelian groups. 
Our main result Theorem~\ref{thm:main} implies  that for abelian groups the answer to Question~\ref{question:strict} is yes. More precisely, Corollary~\ref{cor:strictinequality} 
asserts that when $G$ is abelian, $\sepbeta(G)=\beta(G)$ implies that $G$ is cyclic or $G$ is the direct product of $r$ cyclic groups of even order, where at least $\lfloor \frac r2\rfloor$ of the cyclic factors has order $2$. 

A  interesting special feature of the case of abelian groups is that  the investigation of separating invariants can be tied up with the theory of zero-sum sequences over abelian groups. 
Given a finite abelian group $G$ (written additively) and an ordered sequence $a_1,\dots,a_k$ of elements of $G$ (repetition is allowed) set 
\[\mathcal{G}(a_1,\dots,a_k):=\{(m_1,\dots,m_k)\in \mz^k\colon \sum m_ia_i=0\in G\}.\] 
This is a subgroup of the free abelian group $\mz^k$. It contains the submonoid 
\[\mathcal{B}(a_1,\dots,a_k):=\mn_0^k\cap \mathcal{G}(a_1,\dots,a_k).\] 
Denote by $e_i$ the $i$th standard basis vector in $\mz^k$.  Clearly $\mathrm{ord}_G(a_i)e_i\in \mathcal{B}(a_1,\dots,a_k)$, where $\mathrm{ord}_G(a_i)$ is the order of $a_i$ in $G$.  Since for any 
$m\in \mathcal{G}(a_1,\dots,a_k)$ there exist non-negative integers $t_1,\dots,t_k\in\mn_0$ with 
$m+\sum t_i \mathrm{ord}_G(a_i)e_i\in \mathcal{B}(a_1,\dots,a_k)$, it follows that $\mathcal{G}(a_1,\dots,a_k)$ is the quotient group of the monoid 
$\mathcal{B}(a_1,\dots,a_k)$. In particular, the abelian group $\mathcal{G}(a_1,\dots,a_k)$ is generated by its submonoid  $\mathcal{B}(a_1,\dots,a_k)$. 
In the special case when $a_1,\dots,a_k$ are distinct and $\{a_1,\dots,a_k\}=G$, we recover the monoid $\mathcal{B}(G)$ of zero-sum sequences over $G$, a well studied object in arithmetic combinatorics.  In particular, the {\it Davenport constant}  $\mathsf D(G)$ is defined as the maximal length of an atom in the monoid $\mathcal{B}(G)$, 
where for $s\in\mathcal{B}(G)\subset \mn_0^{|G|}$ the {\it length} of $s$ is $|s|=\sum_{g\in G}s_g$.  More generally, 
the study of the monoid $\mathcal{B}(G_0)$ of zero-sum sequences over an arbitrary subset $G_0$ of $G$ has an extensive literature, see Proposition 2.5.6 in \cite{Ge-HK06a} 
for the first abstract algebraic properties of the monoid $\mathcal{B}(G_0)$, or \cite{Pl-Tr15a} for recent combinatorial work on $\mathsf D(G_0)$ (for some very special subset $G_0$).

From now on we assume that $G$ is a finite abelian group and the characteristic of the base field $\field$ does not divide $|G|$. 
Then $V$ decomposes as a direct sum 
\[V=V_1\oplus\dots\oplus V_k\]  
of $1$-dimensional $G$-modules. Accordingly the variables in $\mathcal{O}(V)=\field[x_1,\dots,x_k]$ will be chosen to be $G$-eigenvectors, so there exist characters $\chi_i\in \widehat G=\hom(G,\field^\times)$ such that $g\cdot x_i=\chi_i(g)x_i$ for $i=1,\dots,k$. For 
$m\in \mn_0^k$ write $x^m=x_1^{m_1}\cdots x_k^{m_k}$. Each monomial spans a $G$-invariant subspace in $\mathcal{O}(V)$, and 
$g\cdot x^m=(\prod_{i=}^k \chi_i(g)^{m_i})x^m$. It follows that $\mathcal{O}(V)^G$ is spanned by $G$-invariant monomials, namely 
\begin{equation}\label{eq:monomial-decomp}\mathcal{O}(V)^G=\bigoplus_{m\in \mathcal{B}(\chi_1,\dots,\chi_k) } \field x^m. 
\end{equation}
Note that here  we use the notation introduced in the above paragraph for the finite abelian group
 $\widehat G$ which is a isomorphic to $G$. 
 A consequence of \eqref{eq:monomial-decomp} is the equality 
 \begin{equation}\label{eq:noether=davenport}
\beta(G)=\mathsf D(G)
\end{equation}
which was used in \cite{schmid}, and later  in \cite{finklea_etal} or in 
\cite{cziszter-domokos-geroldinger}.  
In view of the above connection between the Noether number $\beta(G)$ and the Davenport constant $\mathsf D(G)$ it is natural to ask for the meaning of $\sepbeta(G)$ in terms of zero-sum sequences. This is the second motivation of the present paper. In Theorem~\ref{thm:monomial-separating} we provide a characterization of 
separating sets of monomials and zero-sum sequences over $G$, yielding a characterization of $\sepbeta(G)$ purely in terms of zero-sum sequences over $G$ (see Corollary~\ref{cor:betasep}). 
This is done in Section~\ref{sec:characterization}, and Corollary~\ref{cor:betasep} is used in Section~\ref{sec:degree bounds} to derive our main result 
Theorem~\ref{thm:main}. 

 We finish the introduction by mentioning some prior works related to separating invariants of finite diagonal groups. Namely, a separating set of monomials in $\mathcal{O}(V)^G$ is constructed in Proposition 5 of \cite{neusel-sezer}.  An algorithm to  produce invariant monomials that generate the field of rational invariants is described in \cite{hubert-laban}. 
The focus of present paper is on degree bounds for separating invariants, and therefore it is sufficient to deal with invariant monomials. A different current research direction is the study of the minimal cardinality of a separating system, see for example 
\cite{dufresne-jeffries}. 
 

\section{Characterization of separating sets of monomials}\label{sec:characterization}

Let $G$ be a finite abelian group, and let $V=V_1\oplus  \dots \oplus  V_k$ be a $k$-dimensional $G$-module as in Section~\ref{sec:intro}, so $\mathcal{O}(V)=\field[x_1,\dots,x_k]$ with $g\cdot x_i=\chi_i(g)x_i$ for $i=1,\dots,k$.  
For $m\in \mn_0^k$ set $\support(m):=\{i\in \{1,\dots,k\}\colon m_i\neq 0\}\subset \{1,\dots,k\}$. Similarly, when $x^m$ is a monomial, we shall use the notation $\support(x^m)$ for $\support(m)$.  Given a subset $J\subset \{1,\dots,k\}$ and a set $M\subset \mn_0^k$ we write $M_J:=\{m\in M\colon \support(m)\subset J\}$. 
 
The {\it Helly  dimension} $\kappa(G)$ of $G$ was defined in \cite{domokos} as the minimal positive integer $k$ such that any set of cosets in $G$ with empty intersection contains a subset of at most $k$ cosets with empty intersection. It was shown in \cite{domokos-szabo} that $\kappa(G)$ is one bigger than the minimal number of generators of the finite abelian group $G$ (the {\it rank} of $G$).

\begin{theorem}\label{thm:monomial-separating} 
For a subset $M\subset \mathcal{B}(\chi_1,\dots,\chi_k)$ the following are equivalent: 
\begin{itemize}
\item[(i)] $\{x^m\colon m\in M\}$ is a separating subset in $\mathcal{O}(V)^G$. 
\item[(ii)] For all subsets $\{j_1<\dots <j_s\}=J \subset \{1,\dots,k\}$, the abelian group $\mathcal{G}(\chi_{j_1},\dots,\chi_{j_s})$ is generated by 
$M_J$. 
\item[(iii)] For all subsets $\{j_1<\dots <j_s\}=J \subset \{1,\dots,k\}$ with $|J|\le \kappa(G)$, the abelian group $\mathcal{G}(\chi_{j_1},\dots,\chi_{j_s})$ is generated by 
$M_J$. 
\end{itemize}
\end{theorem}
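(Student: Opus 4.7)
\medskip
\noindent\textbf{Proof plan.} My plan is to close the cycle (ii)$\Rightarrow$(iii)$\Rightarrow$(i)$\Rightarrow$(ii). The first arrow is immediate since (iii) is just (ii) restricted to subsets of bounded cardinality. The two substantive implications both rest on the following duality principle, which I would record first: for any subset $J=\{j_1<\dots<j_s\}\subset\{1,\dots,k\}$ and any tuple $(t_i)_{i\in J}\in(\field^\times)^J$, the homomorphism $\mz^J\to\field^\times$ sending $m\mapsto\prod_i t_i^{m_i}$ vanishes on $\mathcal{G}(\chi_{j_1},\dots,\chi_{j_s})$ if and only if there exists $g\in G$ with $\chi_i(g)=t_i$ for all $i\in J$. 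This is a restatement of Pontryagin duality applied to the short exact sequence $0\to\mathcal{G}(\chi_{j_1},\dots,\chi_{j_s})\to\mz^J\to H_J\to 0$, where $H_J=\langle\chi_i:i\in J\rangle\leq\widehat G$: a character of $\mz^J$ trivial on $\mathcal{G}$ factors through $H_J$, extends to $\widehat G$ because $\field^\times$ is divisible (and $\mathrm{char}(\field)\nmid|G|$), and therefore corresponds under $\widehat{\widehat G}\cong G$ to such a $g$.

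For (i)$\Rightarrow$(ii) I argue contrapositively. Given $J$ such that $\langle M_J\rangle$ is a proper subgroup of $\mathcal{G}(\chi_{j_1},\dots,\chi_{j_s})$, I would pick a nontrivial character $\psi$ of the quotient $\mathcal{G}(\chi_{j_1},\dots,\chi_{j_s})/\langle M_J\rangle$, extend it to $\widetilde\psi\colon\mz^J\to\field^\times$ by divisibility, and build two test points by setting $v_i:=\widetilde\psi(e_i)$, $w_i:=1$ for $i\in J$ and $v_i=w_i=0$ otherwise. These agree on every $x^m$ with $m\in M$ (trivially when $\support(m)\not\subseteq J$, and by the defining property of $\widetilde\psi$ when $m\in M_J$), yet the duality principle forbids them from lying in one $G$-orbit, since any such $g$ would force $\widetilde\psi|_{\mathcal{G}}=\psi$ to be trivial.

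For (iii)$\Rightarrow$(i), I would take $v,w\in V$ with $x^m(v)=x^m(w)$ for every $m\in M$ and establish that they lie in a common $G$-orbit. Unequal supports are ruled out first: if $j\in\support(v)\setminus\support(w)$, then (iii) applied to $J=\{j\}$ forces $\langle M_{\{j\}}\rangle=\mathrm{ord}_G(\chi_j)\mz\neq 0$, so some monomial $x_j^a\in M$ separates $v$ from $w$. Assuming now $\support(v)=\support(w)=I$, set $C_i:=\{g\in G:\chi_i(g)=v_i/w_i\}$ for $i\in I$; an element of $\bigcap_{i\in I}C_i$ is exactly a group element carrying $w$ to $v$. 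For each $J\subseteq I$ with $|J|\le\kappa(G)$, condition (iii) gives $\langle M_J\rangle=\mathcal{G}(\chi_i:i\in J)$, on which the character $m\mapsto\prod(v_i/w_i)^{m_i}$ vanishes by hypothesis, so the duality principle furnishes some $g_J\in\bigcap_{i\in J}C_i$. In particular each $C_i$ is nonempty and hence a coset of $\ker\chi_i$, so the defining property of $\kappa(G)$ applied to $\{C_i\}_{i\in I}$ delivers a common element.

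The main conceptual step, and what I expect to spend the most care on, is recognising that the correct family of cosets on which to deploy the Helly dimension is $\{\chi_i^{-1}(v_i/w_i)\}_{i\in I}$. Once this match is made, conditions (i), (ii), and (iii) become parallel reformulations of the same character-extension problem, and the Helly hypothesis is exactly what bridges the gap between knowing generation of $\mathcal{G}(\chi_{j_1},\dots,\chi_{j_s})$ for all subsets $J$ and knowing it only for $|J|\le\kappa(G)$.
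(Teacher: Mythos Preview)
Your argument is correct and follows a genuinely different route from the paper's. The paper splits the equivalence into modular pieces: one proposition proves (ii)$\Rightarrow$(i) by writing an arbitrary invariant monomial $x^n$ (with $\support(n)\subseteq\support(v)=\support(w)$) as a ratio of products of monomials $x^m$ with $m\in M_J$ and evaluating; another proves that a separating $M$ must generate $\mathcal{G}(\chi_1,\dots,\chi_k)$ by forming the auxiliary group $H=\mz^k/\mz M$, letting it act diagonally on $V$, and using the separating hypothesis on the $H$-orbit of $\xi_1+\dots+\xi_k$ to force $\rho(H)\subseteq G$; and the passage from (iii) to (i) quotes an external result from \cite{domokos-szabo} asserting that $Gv_J=Gw_J$ for all $|J|=\kappa(G)$ implies $Gv=Gw$. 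You instead package everything into a single duality principle and apply it twice. For (i)$\Rightarrow$(ii) you use one nontrivial character of $\mathcal{G}/\langle M_J\rangle$ to build a concrete pair of unseparated points lying in distinct orbits---this character is essentially a single element of the paper's group $H$, so the two arguments are dual. For (iii)$\Rightarrow$(i) you invoke the very definition of $\kappa(G)$ on the cosets $C_i=\chi_i^{-1}(v_i/w_i)$, thereby reproving the cited lemma from \cite{domokos-szabo} inline for the diagonal case and exhibiting an explicit $g$ carrying $w$ to $v$, rather than first checking that all invariants agree on $v,w$ and then appealing to the fact that invariants of a finite group separate orbits. Your version is more self-contained and conceptually uniform; the paper's decomposition, on the other hand, yields standalone lemmas (the ratio trick, the restriction lemma, and the Helly lemma) that are stated and proved in generality beyond the diagonal abelian setting.
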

 
The proof will be split into a couple of statements. 
Consider the $G$-module direct summand  $V_J:=\bigoplus_{j\in J} V_j$ of $V$, where $J\subset \{1,\dots,k\}$. Its coordinate ring $\mathcal{O}(V_J)$ is an algebra retract of $\mathcal{O}(V)$: it is the subalgebra generated by the variables $\{x_j\colon j\in J\}$.  For $v\in V$ we write $v_J$ for the component of $V$ in the direct summand 
$V_J$ of $V$. The statement and proof of Lemma~\ref{lemma:helly} below remain valid when the finite group  $G$ is not assumed to be abelian and the direct summands 
$V_j$ are not assumed to be $1$-dimensional. 

\begin{lemma}\label{lemma:helly} 
Assume that $k\ge \kappa(G)$ and for all $J\subset \{1,\dots,k\}$ with $|J|=\kappa(G)$ we are given a separating subset $S_J$ in $\mathcal{O}(V_J)^G$. 
Then their union $S:=\bigcup_J S_J$ is a separating subset in $\mathcal{O}(V)^G$. 
\end{lemma}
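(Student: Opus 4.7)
The plan is, given $v,w \in V$ with $f(v) = f(w)$ for every $f \in S$, to prove that $v$ and $w$ lie in the same $G$-orbit, whence $S$ separates the $G$-orbits on $V$. The strategy is to use each $S_J$ to produce a ``partial transporter'' moving $v_J$ to $w_J$, and then use the Helly property of cosets in $G$ to stitch these local transporters into a single group element $g \in G$ with $gv = w$.

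First I would note that any $f \in S_J \subset \mathcal{O}(V_J)^G$ can be regarded as a $G$-invariant function on $V$ via the projection $V \to V_J$, and then $f(u) = f(u_J)$ for every $u \in V$. So $f(v_J) = f(w_J)$ for every $f \in S_J$, and since $S_J$ separates $G$-orbits on $V_J$, we conclude $Gv_J = Gw_J$. In particular, for every $J \subset \{1,\dots,k\}$ with $|J| = \kappa(G)$ there exists $g_J \in G$ with $g_J v_i = w_i$ for all $i \in J$.

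Next I would set $A_i := \{g \in G \colon g v_i = w_i\}$ for each $i \in \{1,\dots,k\}$. If non-empty, $A_i$ is a left coset of the stabilizer of $v_i$ in $G$. The previous step gives $g_J \in \bigcap_{i \in J} A_i$ for each $J$ with $|J| = \kappa(G)$; in particular, since $k \ge \kappa(G)$, every index $i$ lies in some such $J$, so each $A_i$ is a genuine non-empty coset. Moreover, for any $J' \subset \{1,\dots,k\}$ with $|J'| \le \kappa(G)$, enlarging $J'$ to some $J$ of size $\kappa(G)$ yields $\bigcap_{i \in J'} A_i \supseteq \bigcap_{i \in J} A_i \ni g_J$, so $\bigcap_{i \in J'} A_i \ne \emptyset$.

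Finally I would invoke the contrapositive form of the definition of $\kappa(G)$: since every subfamily of the coset collection $\{A_1,\dots,A_k\}$ of cardinality at most $\kappa(G)$ has non-empty intersection, the whole family has non-empty intersection. Any $g \in \bigcap_{i=1}^k A_i$ then satisfies $gv_i = w_i$ for every $i$, so $gv = w$ and $Gv = Gw$, as desired. The only delicate point is the third step, namely ensuring that each $A_i$ is a coset rather than the empty set, because only then does the Helly-dimension property of $G$ apply; this is precisely what the hypothesis $k \ge \kappa(G)$ (combined with each $|J|=\kappa(G)$ giving a non-empty $A_J$) provides.
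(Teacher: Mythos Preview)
Your proof is correct and follows the same opening move as the paper: from $f(v)=f(w)$ for all $f\in S$ you deduce $Gv_J=Gw_J$ for every $J$ of size $\kappa(G)$. The difference lies in the final step. The paper simply invokes Lemma~4.1 of \cite{domokos-szabo} as a black box to pass from ``$Gv_J=Gw_J$ for all $|J|=\kappa(G)$'' to $Gv=Gw$, whereas you unpack that passage explicitly: you introduce the transporter cosets $A_i=\{g:gv_i=w_i\}$, verify that every $\kappa(G)$ of them intersect, and then apply the defining Helly property of $\kappa(G)$ directly. Your version is thus more self-contained and makes transparent exactly where the Helly dimension enters; the paper's version is terser but relies on the reader having \cite{domokos-szabo} at hand. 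Both arguments work equally well in the more general setting (non-abelian $G$, summands $V_j$ of arbitrary dimension) noted before the lemma.
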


\begin{proof} Suppose that for $v,w\in V$ we have $f(v)=f(w)$ for all $f\in S$. Then in particular, for a fixed $J\subset \{1,\dots,k\}$ with $|J|=\kappa(G)$ we have 
$f(v_J)=f(w_J)$ for all $f\in S_J$. Since $S_J$ is a separating subset in $\mathcal{O}(V_J)^G$, we conclude that $Gv_J=Gw_J$.  
This holds for all $J\subset \{1,\dots,k\}$ with $|J|=\kappa(G)$, hence by Lemma 4.1 in \cite{domokos-szabo} we get that $Gv=Gw$. 
\end{proof} 

\begin{proposition}\label{prop:(ii)implies(i)} 
Let $M$ be a subset of  $\mathcal{B}(\chi_1,\dots,\chi_k)$ such that for all $J=\{j_1<\dots<j_s\}\subset \{1,\dots,k\}$ the abelian group 
$\mathcal{G}(\chi_{j_1},\cdots,\chi_{j_s})$ is generated by $M_J$. Then $\{x^m\colon m\in M\}$ is a separating set in $\mathcal{O}(V)^G$. 
\end{proposition}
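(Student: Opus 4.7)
The plan is to take $v, w \in V$ satisfying $x^m(v) = x^m(w)$ for every $m \in M$ and show $Gv = Gw$. Writing $v = (v_1, \dots, v_k)$ and $w = (w_1, \dots, w_k)$ in the eigenbasis, I would argue in two stages: first prove $\support(v) = \support(w)$, then, once a common support $J$ is fixed, produce an explicit $g \in G$ with $gw = v$ via Pontryagin duality.

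For the support equality, suppose some index $i_0$ lies in $J_v := \support(v)$ while $w_{i_0} = 0$. Applied to $J = J_v$, the hypothesis yields that $\mathcal{G}(\chi_j : j \in J_v)$ is generated by $M_{J_v}$. Since $\mathrm{ord}_G(\chi_{i_0})\,e_{i_0}$ is a nonzero element of this group, at least one $m \in M_{J_v}$ must satisfy $m_{i_0} > 0$; otherwise every integer combination from $M_{J_v}$ would have vanishing $i_0$-coordinate. Then $x^m(w) = 0$ while $x^m(v) \ne 0$ (as $\support(m) \subset J_v$), contradicting the assumption. Hence $J_v \subseteq J_w$, and by symmetry $J_v = J_w =: J$.

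For the second stage, consider the homomorphism $\phi \colon G \to (\field^\times)^J$ given by $\phi(g) = (\chi_j(g))_{j \in J}$. I claim its image equals the annihilator $A := \{(\alpha_j) \in (\field^\times)^J \colon \prod_j \alpha_j^{n_j} = 1 \text{ for all } (n_j) \in \mathcal{G}(\chi_j : j \in J)\}$. This follows from Pontryagin duality for finite abelian groups (using that $\field$ is algebraically closed and $\mathrm{char}(\field) \nmid |G|$): the map $\phi$ factors as $G \twoheadrightarrow \widehat{H} \hookrightarrow (\field^\times)^J$, where $H = \langle \chi_j : j \in J\rangle \le \widehat{G}$, the surjection is dual to the inclusion $H \hookrightarrow \widehat{G}$, and $\widehat{H}$ is identified with $A$ via evaluation at the generators $\chi_j$. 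Because $M_J$ generates $\mathcal{G}(\chi_j : j \in J)$, membership in $A$ reduces to the finitely many conditions $\prod_j \alpha_j^{m_j} = 1$ for $m \in M_J$. Applied to $\alpha_j := v_j/w_j$ (well-defined for $j \in J$), these conditions read $x^m(v) = x^m(w)$ for $m \in M_J$, which hold by hypothesis. Thus some $g \in G$ satisfies $\chi_j(g) = v_j/w_j$ for all $j \in J$, and $gw = v$ follows since both sides also vanish outside $J$.

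The step requiring most care is the duality identification of $\phi(G)$ with the annihilator $A$; the rest amounts to bookkeeping with the supports of $v$ and $w$. I also note that only the special case $J = \support(v)$ of the hypothesis is used in the first stage, and only a single $J$ is used in the second, so the proof actually establishes the implication coordinate-subset by coordinate-subset, which will later be useful when extracting condition (iii) from (ii).
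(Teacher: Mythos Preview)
Your proof is correct, but it takes a genuinely different route from the paper's in the second stage. After establishing $\support(v)=\support(w)=:J$ (the paper does this via the singleton case $J=\{j\}$, you via $J=J_v$; both are fine), the paper stays purely at the level of invariants: it takes an arbitrary $G$-invariant monomial $x^n$ with $\support(n)\subset J$, writes $n=u_1+\dots+u_p-t_1-\dots-t_l$ with $u_i,t_i\in M_J$, and deduces $x^n(v)=x^n(w)$ by cancelling the (nonzero on $J$) factors $x^{t_i}$. Thus the paper only proves that $v$ and $w$ agree under every invariant, relying on the standard fact (recalled in the introduction) that for finite groups this forces $Gv=Gw$.

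You instead construct an explicit $g\in G$ sending $w$ to $v$ via Pontryagin duality: identifying $\phi(G)$ with the annihilator $A$ of $\mathcal{G}(\chi_j:j\in J)$ in $(\field^\times)^J$, and checking that $(v_j/w_j)_{j\in J}\in A$ because $M_J$ generates $\mathcal{G}(\chi_j:j\in J)$. This is slightly more structural and yields the stronger conclusion $v\in Gw$ directly, bypassing the ``invariants separate orbits'' fact. The trade-off is that you invoke duality (where the identification $\phi(G)=A$ needs the hypotheses on $\field$), whereas the paper's argument is a completely elementary monomial manipulation. Either way the logical content is the same: the condition $\prod_j\alpha_j^{m_j}=1$ for all $m\in M_J$ is equivalent to $\prod_j\alpha_j^{n_j}=1$ for all $n\in\mathcal{G}(\chi_j:j\in J)$, and the two proofs simply interpret this equivalence on the two sides.
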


\begin{proof} Take $v,w\in V$ such that $x^m(v)=x^m(w)$ for all $m\in M$. The assumption says in particular that $M_{\{j\}}$ generates $\mathcal{G}(\chi_j)$ for $j=1,\dots,d$. 
Since $\mathcal{G}(\chi_j)$ is the subgroup of $\mz$ generated by $\mathrm{ord}_{\widehat G}(\chi_i)$, it follows that some positive power of $x_j$ belongs to $\{x^m\colon m\in M\}$. 
Thus $x_j(v)=0$ if and only if $x_j(w)=0$, so $\support(v)=\support(w)=:J$. Take an arbitrary $G$-invariant monomial $x^n$. 
If $\support(n)\nsubseteq J$, then $x^n(v)=0=x^n(w)$. Otherwise $\support(n)\subseteq J=\{j_1<\dots <j_s\}$. Since $M_J$ generates $\mathcal{G}(\chi_{j_1},\dots,\chi_{j_s})$, there exist $u_1,\dots,u_k,t_1,\dots,t_l\in M_J$ such that $n=u_1+\dots+u_k-t_1-\dots-t_l\in \mz^s$, implying $x^nx^{t_1}\dots x^{t_l}=x^{u_1}\dots x^{u_k}$. 
By our assumption on $v,w$ we have 
$u_i(v)=u_i(w)$ for $i=1,\dots,k$ and $t_i(v)=t_i(w)\neq 0$ for $i=1,\dots,l$. It follows that 
\[x^n(v)=\frac{x^{u_1}(v)\dots x^{u_k}(v)}{x^{t_1}(v)\dots x^{t_l}(v)}=\frac{x^{u_1}(w)\dots x^{u_k}(w)}{x^{t_1}(w)\dots x^{t_l}(w)}=x^n(w).\] 
Thus we proved that $x^n(v)=x^n(w)$ holds for an arbitrary $G$-invariant monomial $x^n$, implying in turn that $h(v)=h(w)$ for any $h\in \mathcal{O}(V)^G$. 
\end{proof}

\begin{proposition}\label{prop:(i)implies(ii)} 
Let $M$ be a subset of $\mathcal{B}(\chi_1,\dots,\chi_k)$ such that $\{x^m\colon m\in M\}$ is a separating set in $\mathcal{O}(V)^G$. 
Then the abelian group $\mathcal{G}(\chi_1,\dots,\chi_k)$ is generated by $M$. 
\end{proposition}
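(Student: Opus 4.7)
I argue by contrapositive: suppose $H:=\langle M\rangle_{\mz}$ is a proper subgroup of $\mathcal{G}:=\mathcal{G}(\chi_1,\dots,\chi_k)$, and I will exhibit $v,w\in V$ in distinct $G$-orbits with $x^m(v)=x^m(w)$ for every $m\in M$. Identify $V\cong\field^k$ via the basis dual to $x_1,\dots,x_k$, and associate to each $\lambda=(\lambda_1,\dots,\lambda_k)\in(\field^\times)^k$ the character $\phi_\lambda\colon\mz^k\to\field^\times$, $m\mapsto\prod_i\lambda_i^{m_i}$.

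With $v=(1,\dots,1)$ and $w=\lambda$, the condition $x^m(v)=x^m(w)$ for all $m\in M$ becomes $\phi_\lambda|_H=1$, while $Gv=Gw$ is equivalent to $\lambda_i=\chi_i(g)$ for some $g\in G$. Writing $\Gamma\subset\widehat G$ for the subgroup generated by $\chi_1,\dots,\chi_k$, the isomorphism $\mz^k/\mathcal{G}\cong\Gamma$ combined with finite abelian Pontryagin duality for $\Gamma$ (available because $|\Gamma|$ divides $|G|$ and is hence coprime to $\mathrm{char}(\field)$) rephrases the orbit condition as $\phi_\lambda|_\mathcal{G}=1$. So the task reduces to finding $\phi\colon\mz^k\to\field^\times$ with $\phi|_H=1$ but $\phi|_\mathcal{G}\neq 1$, i.e.\ a character of $\mz^k/H$ that is nontrivial on the image of $\mathcal{G}/H$.

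Because $\field$ is algebraically closed, $\field^\times$ is a divisible (hence injective) abelian group, so $\hom(-,\field^\times)$ is exact. Applied to the short exact sequence $0\to\mathcal{G}/H\to\mz^k/H\to\Gamma\to 0$, this yields a surjection $\hom(\mz^k/H,\field^\times)\twoheadrightarrow\hom(\mathcal{G}/H,\field^\times)$, which reduces the task to producing a single nontrivial homomorphism from the nonzero finitely generated abelian group $\mathcal{G}/H$ to $\field^\times$. Any such character lifts through the surjection to a $\phi$ as required, and then $\lambda_i=\phi(e_i)$ supplies the pair $(v,w)$ contradicting the separating hypothesis.

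I expect the last step to be the only delicate point. One selects a nonzero cyclic summand of $\mathcal{G}/H$ and sends a generator either to a non-torsion element of $\field^\times$ (for a free summand) or to a primitive root of unity of appropriate order (for a finite summand). In characteristic zero this is immediate. In positive non-modular characteristic $p$, where $\field^\times$ carries no nontrivial $p$-power torsion, one needs to know that $\mathcal{G}/H$ possesses a cyclic summand of order coprime to $p$ (or an infinite summand); this is where the non-modular hypothesis $\gcd(\mathrm{char}(\field),|G|)=1$ together with $[\mz^k:\mathcal{G}]=|\Gamma|\mid|G|$ is to be exploited, in conjunction with the constraint $M\subset\mathcal{B}(\chi_1,\dots,\chi_k)$ forcing $H$ to sit compatibly inside $\mathcal{G}$.
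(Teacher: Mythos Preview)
Your argument is essentially the contrapositive of the paper's. Both hinge on characters $\mz^k/\langle M\rangle_{\mz}\to\field^\times$: the paper takes the full character group of the finite group $\mz^k/\mz M$, realizes it as a diagonal subgroup of $GL(V)$, and uses the separating hypothesis to force this subgroup into $G$; you instead assume $\langle M\rangle_{\mz}\subsetneq\mathcal G$, pick a single character of $\mz^k$ trivial on $\langle M\rangle_{\mz}$ but not on $\mathcal G$, and read off one pair of unseparated points in distinct $G$-orbits. Your use of divisibility (hence injectivity) of $\field^\times$ to lift from $\hom(\mathcal G/H,\field^\times)$ to $\hom(\mz^k/H,\field^\times)$ is a clean way to package what the paper does by hand.

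You are right that the last step is the only delicate one, but the fix you sketch does not go through, and in fact the gap cannot be closed: in positive non-modular characteristic the statement as written is false. Take $G=C_3$, $\field=\overline{\field_2}$, $k=1$, and $\chi_1$ a faithful character, so $\mathcal G(\chi_1)=3\mz$. With $M=\{6\}\subset\mathcal B(\chi_1)$ one gets $\langle M\rangle_{\mz}=6\mz$, hence $\mathcal G/\langle M\rangle_{\mz}\cong\mz/2\mz$ is a $p$-group admitting no nontrivial homomorphism to $\field^\times$. And $\{x^6\}$ \emph{is} a separating set here, since $v^6=w^6\iff (v^3)^2=(w^3)^2\iff v^3=w^3$ by injectivity of Frobenius. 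So none of the constraints you invoke force $\mathcal G/\langle M\rangle_{\mz}$ to have a nontrivial prime-to-$p$ quotient. The paper's own proof has the matching defect: it asserts that the finite abelian group $\mz^k/\mz M$ is isomorphic to its $\field^\times$-valued character group, which fails in this example for exactly the same reason. In characteristic zero both your argument and the paper's are complete, since $\field^\times$ then contains primitive $n$-th roots of unity for every $n\ge 1$, so every nonzero finitely generated abelian group has a nontrivial homomorphism into it.
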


\begin{proof} Since $e_i\in V$ can be separated from $0$ by $x^m$  for some $m\in M$, a positive power $x_i^{n_i}$ belongs to $\{x^m\colon m\in M\}$ for each $i=1,\dots,d$. 
Obviously it is sufficient to prove the statement when $G$ acts faithfully on $V$, so $G\subset GL(V)$ and hence $\widehat G=\langle \chi_1,\dots,\chi_k\rangle$. 
On the other hand $\langle \chi_1,\dots,\chi_k\rangle\cong \mz^k/\mathcal{G}(\chi_1,\dots,\chi_k)$, as $\mathcal{G}(\chi_1,\dots,\chi_k)$ was defined as the kernel of the natural surjection $\mz^k\to \widehat G$ with $e_i\mapsto \chi_i$. Denote by $H$ the abelian group $\mz^k/\mz M$. This group is finite, as $n_ie_i\in M$, hence it is isomorphic to its character group $\widehat H$. Therefore we may choose generators $\psi_1,\dots,\psi_k\in \widehat H$ such that the natural surjection $\mz^k\to \widehat H$, 
$e_i\mapsto \psi_i$ ($i=1,\dots,d$) has kernel $M\mz$. For $h\in H$ let $\rho(h)\in GL(V)$ be the linear transformation given by  $\rho(h)\xi_i=\psi_i(h^{-1})\xi_i$ where $\xi_i$ spans the summand $V_i$ in $V=\bigoplus_{i=1}^kV_i$. Then 
$\rho: H\to GL(V)$ is an injective group homomorphism. Note that for any $q\in \mn_0^k$ we have 
$x^q(\rho(h)(\xi_1+\dots+\xi_k))=\prod_{i=1}^k\psi_i(h^{-1})^{q_i}$. By the choice of $\psi_i$, for any $m\in M$ and any $h\in H$ and  we have $\prod_{i=1}^k\psi_i(h^{-1})^{m_i}=1$. 
Therefore we have 
\[x^m(\rho(h)(\xi_1+\dots +\xi_k))=1\]
for all $m\in M$ and $h\in H$. On the other hand $x^m(\xi_1+\dots + \xi_k)=1$ as well. Since $\{x^m\colon m\in M\}$ is a separating set in $\mathcal{O}(V)^G$, we 
conclude that the $H$-orbit of $\xi_1+\dots +\xi_k$ is contained in the $G$-orbit of $\xi_1+\dots+\xi_k$.  
Thus for each $h\in H$ there exists a $g\in G$ such that $\rho(h)(\xi_1+\dots+\xi_k)=g(\xi_1+\dots+\xi_k)$, implying in turn that $\rho(h)\xi_i=g\xi_i$ for each basis vector $\xi_i\in V$, and hence $\rho(h)=g$. 
So we have 
$H\cong \rho(H)\subset G\subset GL(V)$. 
Therefore $\psi_i=\chi_i\circ \rho$ for $i=1,\dots,k$, and 
it follows that the natural surjection $\mz^k\to \widehat H$, $e_i\mapsto \psi_i$ factors through the natural surjection 
$\mz^k\to \widehat G$, $e_i\mapsto \chi_i$, so $\mz M\supset \mathcal{G}(\chi_1,\dots,\chi_k)$. Now as $M$ was a subset of $\mathcal{B}(\chi_1,\dots,\chi_k)\subset  \mathcal{G}(\chi_1,\dots,\chi_k)$, the reverse inclusion $\mz M\subset  \mathcal{G}(\chi_1,\dots,\chi_k)$ also holds, forcing the equality $\mz M= \mathcal{G}(\chi_1,\dots,\chi_k)$.  
\end{proof} 
 
 \begin{lemma}\label{lemma:sep_restrict} If $\{x^m\colon m\in M\}$ is a separating set in $\mathcal{O}(V)^G$, then for all subsets $J\subset \{1,\dots,k\}$ the monomials 
 $\{x^m\colon m\in M_J\}$ constitute a separating set in $\mathcal{O}(V_J)^G$. 
 \end{lemma}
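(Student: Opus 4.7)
The plan is straightforward: embed $V_J$ into $V$ as the subspace on which all $x_i$ with $i\notin J$ vanish, and exploit the fact that any monomial $x^m$ with $\support(m)\not\subset J$ automatically vanishes on such points. This means the only monomials from $M$ capable of distinguishing two points of $V_J$ are exactly those in $M_J$, so the separating property transfers from $\mathcal{O}(V)^G$ down to $\mathcal{O}(V_J)^G$.

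Concretely, I would take $v,w\in V_J$ with $x^m(v)=x^m(w)$ for every $m\in M_J$, and view them as elements of $V$ via the natural inclusion (so $x_i(v)=x_i(w)=0$ for $i\notin J$). For any $m\in M\setminus M_J$ the support of $m$ contains some index $i\notin J$, making $x_i$ a factor of $x^m$; hence $x^m(v)=0=x^m(w)$. Combined with the hypothesis, we get $x^m(v)=x^m(w)$ for all $m\in M$, and the hypothesis that $\{x^m:m\in M\}$ separates points on $V$ then forces $Gv=Gw$ inside $V$.

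To finish, I would observe that $V_J=\bigoplus_{j\in J}V_j$ is a $G$-submodule of $V$ because each eigenline $V_j$ is $G$-stable. Therefore any $g\in G$ with $gv=w$ keeps $V_J$ invariant, and the equality $gv=w$ already takes place in $V_J$. So $v$ and $w$ lie in the same $G$-orbit of $V_J$, which yields $h(v)=h(w)$ for every $h\in\mathcal{O}(V_J)^G$, establishing that $\{x^m:m\in M_J\}$ is a separating subset of $\mathcal{O}(V_J)^G$.

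There is no real obstacle here; the statement is essentially a bookkeeping consequence of the definitions, combined with the observation that the decomposition $V=\bigoplus V_i$ is $G$-invariant so that restriction to $V_J$ commutes with the group action. The only thing to take care of is the distinction between $m\in M_J$ (where the hypothesis applies directly) and $m\in M\setminus M_J$ (where separation is automatic because $x^m$ vanishes on $V_J$).
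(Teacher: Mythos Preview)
Your proof is correct and follows essentially the same approach as the paper: both rely on the key observation that any monomial $x^m$ with $m\in M\setminus M_J$ vanishes identically on $V_J$, so separation on $V_J$ only involves $M_J$. The paper phrases the reduction via the inclusion $\mathcal{O}(V_J)^G\subset\mathcal{O}(V)^G$ (if some $h\in\mathcal{O}(V_J)^G$ separates $v,w$, then so does some $f$ in the separating set), whereas you phrase it via orbits (agreement on all of $M$ forces $Gv=Gw$ in $V$, hence in the $G$-stable subspace $V_J$); these are just two ways of unwinding the same definition.
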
 
 
 \begin{proof}  
 We claim that if $S$ is a separating set in $\mathcal{O}(V)^G$, then its restriction $\{f\vert_{V_J}\colon f\in S\}$ to $V_J$ is a separating set in $\mathcal{O}(V_J)^G$. Indeed, suppose $h(v)\neq h(w)$ for some $v,w\in V_J$ and $h\in \mathcal{O}(V_J)^G$. 
 Since the algebra $\mathcal{O}(V_J)^G$ is contained in $\mathcal{O}(V)^G$, it follows that there exists an $f\in S$ with $f(v)\neq f(w)$, so $f\vert_{V_J}$ separates $v$ and $w$. This proves the claim. 
 Now observe that if $m$ does not belong to $M_J$, then the monomial $x^m$ vanishes identically on $V_J$.  
 Consequently the restriction to $V_J$ of $\{x^m\colon m\in M\}$ is contained in  $\{x^m\colon m\in M_J\}\cup\{0\}$, and our statement follows. 
 \end{proof} 
 
 \begin{proofof}{Theorem~\ref{thm:monomial-separating}}
(i)$\Rightarrow$(ii): Suppose that $\{x^m\colon m\in M\}$ is a separating set in $\mathcal{O}(V)^G$, and take a subset $J=\{j_1<\dots<j_s\}\subset \{1,\dots,k\}$. By   
Lemma~\ref{lemma:sep_restrict} $\{x^m\colon m\in M_{J}\}$ is a separating set in $\mathcal{O}(V_J)^G$. Applying Proposition~\ref{prop:(i)implies(ii)} for $V_J$ and $M_J$ we conclude that the abelian group $\mathcal{G}(\chi_{j_1},\dots,\chi_{j_s})$ is generated by $M_J$. 

(ii)$\Rightarrow$ (iii):  Trivial. 

(iii)$\Rightarrow$(i): Suppose that (iii) holds.  Then for any subset $J=\{j_1<\dots<j_s\}\subset \{1,\dots,k\}$ with $s=|J|\le \kappa(G)$,  
the set $M_J$ generates the abelian group $\mathcal{G}(\chi_{j_1},\dots,\chi_{j_s})$, hence by Proposition~\ref{prop:(ii)implies(i)} $\{x^m\colon m\in M_J\}$ is a separating set 
in $\mathcal{O}(V_J)^G$. If $d\le\kappa(G)$, then we may take $J=\{1,\dots,k\}$ and we are done. Otherwise the union of the $\{x^m \colon m\in M_J\}$ as $J$ ranges over the 
subsets of $\{1,\dots,k\}$ of size $\kappa(G)$ is a separating set in $\mathcal{O}(V)^G$ by Lemma~\ref{lemma:helly}.  
\end{proofof} 

\begin{corollary}\label{cor:betasep} 
The number $\sepbeta(G)$ is the minimal positive integer $d$ such that for any positive integer $s\le \kappa(G)$ and any finite sequence $a_1,\dots,a_s$ of distinct elements of $G$ the abelian group $\mathcal{G}(a_1,\dots,a_s)$ is generated by 
$\{m\in \mathcal{B}(a_1,\dots,a_s)\colon |m|\le d\}$. 
\end{corollary}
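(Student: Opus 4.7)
The plan is to establish the two inequalities $d^\ast\le\sepbeta(G)$ and $\sepbeta(G)\le d^\ast$ separately, where $d^\ast$ denotes the minimal integer described on the right-hand side of the statement. Both directions proceed through Theorem~\ref{thm:monomial-separating}, making use of the identification $G\cong\widehat G$ available under the standing assumption that $\field$ is algebraically closed of characteristic not dividing $|G|$.

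For $d^\ast\le\sepbeta(G)$, fix $s\le\kappa(G)$ and distinct $a_1,\dots,a_s\in G$. Regarding them as characters under $G\cong\widehat G$, take $V=V_1\oplus\dots\oplus V_s$ with $g\cdot x_i=a_i(g)\,x_i$. The definition of $\sepbeta(G)$ supplies a separating set of homogeneous elements of $\mathcal O(V)^G$ of degree at most $\sepbeta(G)$. Expanding each such invariant in the monomial basis furnished by \eqref{eq:monomial-decomp}, and using that a linear combination $\sum c_j x^{m_j}$ can separate two points only if one of the $x^{m_j}$ does, one extracts a separating set $\{x^m:m\in M\}$ of invariant monomials with $|m|\le\sepbeta(G)$ for every $m\in M$. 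Direction (i)$\Rightarrow$(ii) of Theorem~\ref{thm:monomial-separating}, applied with $J=\{1,\dots,s\}$, then yields that $M$ generates $\mathcal G(a_1,\dots,a_s)$, which is precisely the condition defining $d^\ast$.

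For $\sepbeta(G)\le d^\ast$, take any $G$-module $V$ with weights $\chi_1,\dots,\chi_k$ and apply direction (iii)$\Rightarrow$(i) of Theorem~\ref{thm:monomial-separating}: it suffices, for each $J=\{j_1<\dots<j_s\}\subseteq\{1,\dots,k\}$ with $s\le\kappa(G)$, to exhibit generators of $\mathcal G(\chi_{j_1},\dots,\chi_{j_s})$ lying in $\mathcal B(\chi_{j_1},\dots,\chi_{j_s})$ and of length at most $d^\ast$. The principal obstacle is that $(\chi_{j_1},\dots,\chi_{j_s})$ may contain repeated entries, whereas the hypothesis on $d^\ast$ concerns distinct elements. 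To bridge this, let $b_1,\dots,b_u$ be the distinct values among the $\chi_{j_r}$ (so $u\le s\le\kappa(G)$) and let $\pi\colon\mz^s\to\mz^u$ be the collapse map $e_r\mapsto e_{\phi(r)}$ with $\chi_{j_r}=b_{\phi(r)}$. It restricts to a surjection $\mathcal G(\chi_{j_1},\dots,\chi_{j_s})\twoheadrightarrow\mathcal G(b_1,\dots,b_u)$ whose kernel is generated by $\{e_r-e_t:\phi(r)=\phi(t)\}$. The hypothesis provides generators of the target of length at most $d^\ast$; each lifts through $\pi$ to an element of $\mathcal B(\chi_{j_1},\dots,\chi_{j_s})$ of equal length by placing each of its coordinates in a single preselected preimage. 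Each kernel generator $e_r-e_t$ equals $n e_r - ((n-1)e_r+e_t)$ with $n=\mathrm{ord}_G(\chi_{j_r})$; both summands lie in $\mathcal B(\chi_{j_1},\dots,\chi_{j_s})$ and have length $n\le\exp(G)\le d^\ast$, where the last bound is a free consequence of applying the $d^\ast$-hypothesis to singleton sequences $(a)$. The union of these two families of lifts is the required generating set, yielding the desired separating set via Theorem~\ref{thm:monomial-separating} and completing the proof.
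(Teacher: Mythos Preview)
Your proof is correct and follows essentially the same strategy as the paper: both inequalities go through Theorem~\ref{thm:monomial-separating} and the identification $G\cong\widehat G$, and the passage from distinct to possibly repeated sequences rests on the fact that $\mathrm{ord}(a)\le d^\ast$ for every $a\in G$. The paper packages this last step as a separate equality $\delta(G)=\delta_0(G)$ (where $\delta$ allows repetitions and $\delta_0$ does not) proved by induction on the length of the sequence, whereas you fold it directly into the upper bound via the collapse map $\pi$ and the explicit kernel identity $e_r-e_t=ne_r-\bigl((n-1)e_r+e_t\bigr)$; these are two presentations of the same argument.
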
 

\begin{proof} For a finite abelian group $H$ denote by $\delta(H)$ (respectively $\delta_0(H)$) the minimal positive integer $d$ such that for any $s\le\kappa(H)$ and any sequence $a_1,\dots,a_s$ of not necessarily distinct (respectively distinct) elements of $H$ the  group $\mathcal{G}(a_1,\dots,a_s)$ is generated by 
the $m\in \mathcal{B}(a_1,\dots,a_s)$ with  $|m|\le d$. Obviously  $\delta_0(H)\le \mathsf D(H)\le |H|$. 
We claim that $\delta(H)=\delta_0(H)$. Indeed, the inequality $\delta_0(H)\le \delta(H)$ holds by definition of $\delta$ and $\delta_0$. To see the reverse inequality take an arbitrary sequence $a_1,\dots,a_s\in H$, $s\le\kappa(H)$. By induction on $s$ we show that  $\mathcal{G}(a_1,\dots,a_s)$ is generated by 
$\{m\in \mathcal{B}(a_1,\dots,a_s)\colon |m|\le \delta_0(H)\}$. If $a_1,\dots,a_s$ are distinct, then we are done by definition of $\delta_0$.   Otherwise suppose $a_1=a_2$. Clearly $\delta_0(H)\ge \mathrm{ord}_H(a_1)=q$, 
and $n:=(1,q-1,0,\dots,0)\in\mathcal{B}(a_1,\dots,a_s)$ satisfies $|n|\le \delta_0(H)$. 
Moreover, given any $m\in \mathcal{G}(a_1,\dots,a_s)$, replace $m$ by $\tilde m:=m-m_1n$. Then $\tilde m$ belongs to 
$\mathcal{G}(a_2,\dots,a_s)$ (viewed as the subset of $\mathcal{G}(a_1,\dots,a_s)$ consisting of the elements whose first coordinate is zero). By the induction hypothesis $\tilde  m$ belongs to the group generated by $\{m\in \mathcal{B}(a_2,\dots,a_s)\colon |m|\le \delta_0(H)\}$, implying in turn that $m$ belongs to the group generated by $\{m\in \mathcal{B}(a_1,\dots,a_s)\colon |m|\le \delta_0(H)\}$. This shows $\delta(H)=\delta_0(H)$. 

Now take a $G$-module $V$ with the notation  of the beginning of Section~\ref{sec:characterization}. 
For any subset $\{i_1<\dots<i_s\}\subset \{1,\dots,k\}$ with $s\le\kappa(\widehat G)$ the abelian group 
$\mathcal{G}(\chi_{i_1},\dots,\chi_{i_s})$ is generated by the elements 
$m\in\mathcal{B}(\chi_{i_1},\dots,\chi_{i_s})$ with  $|m|\le \delta(\widehat G)$. It follows by Theorem~\ref{thm:monomial-separating} that $\{x^m\colon m\in\mathcal{B}(\chi_1,\dots,\chi_k),\quad |m|\le\delta(\widehat G)\}$ is a separating set in $\mathcal{O}(V)^G$. Thus $\sepbeta(G,V)\le \delta(\widehat G)$. Since $V$ was an arbitrary $G$-module, we 
deduce the inequality $\sepbeta(G)\le \delta(\widehat G)$. Note finally that the isomorphism $G\cong \widehat G$ implies $\delta(G)=\delta(\widehat G)$. Combining with the first paragraph we obtain 
$\sepbeta(G)\le \delta_0(G)$. 

To show the reverse inequality $\sepbeta(G)\ge \delta_0(G)$, take 
a sequence 
 $\chi_1,\dots,\chi_k$ of characters of $G$ such that $k\le\kappa(\widehat G)$ and the abelian group $\mathcal{G}(\chi_1,\dots,\chi_k)$ is not generated by 
 $\{m\in\mathcal{B}(\chi_1,\dots,\chi_k)\colon |m|<\delta_0(\widehat G)\}$. 
 Such a sequence $\chi_1,\dots,\chi_k$ exists by definition of $\delta_0(\widehat G)$. 
 It follows by Theorem~\ref{thm:monomial-separating} that 
 $\{x^m\colon m\in\mathcal{B}(\chi_1,\dots,\chi_k), \quad |m| < \delta_0(G)\}$ is not a separating set in $\field[x_1,\dots,x_k]^G=\mathcal{O}(V)^G$, where the $G$-module $V$ is determined by $g\cdot x_i=\chi_i(g)x_i$ for $i=1,\dots,k$. Taking into account \eqref{eq:monomial-decomp} we conclude 
 $ \sepbeta(G,V)\ge \delta_0(G)$, implying in turn $\sepbeta(G)\ge \delta_0(G)$. 
 \end{proof} 


\section{Degree bounds}\label{sec:degree bounds} 

We fix the following notation for the whole Section. 
Decompose our (additively written) abelian group $G$ as a direct product of cyclic groups 
\[G=C_{n_1}\oplus \dots\oplus C_{n_r}\] 
where $n_r\mid n_{r-1}\mid\dots\mid n_1$ and $n_r>1$, so in particular $n_1$ is the {\it exponent} of $G$ and $r$ is the {\it rank} (the minimal number of generators) of $G$, hence the Helly dimension of $G$ is $\kappa(G)=r+1$. 
Set 
\[\mathsf d^*(G)=\sum_{i=1}^r(n_i-1).\] 
It is well known that 
\begin{equation}\label{eq:lowerboundfordavenport}\mathsf d^*(G)+1\le \mathsf D(G)
\end{equation}
where $\mathsf D(G)$ is the Davenport constant of $G$ (cf. Section~\ref{sec:intro}). 
Classical results in arithmetic combinatorics assert that we have equality in \eqref{eq:lowerboundfordavenport} if $G$ is a $p$-group or $G$ has rank two. On the other hand there are some infinite sequences of finite abelian groups for which the  inequality in \eqref{eq:lowerboundfordavenport} is known to be strict. 
Beyond that it is not well understood when equality holds in \eqref{eq:lowerboundfordavenport}. 
We refer to the surveys  \cite{geroldinger-gao} and \cite{Ge09a} for the above results and for  references on zero-sum sequences in finite abelian groups. 

We shall need the following technical and elementary lemma. 

\begin{lemma}\label{lemma:inequality}
Let $(n_1,\dots,n_r),(m_1,\dots,m_r)\in\mn^r$ be $r$-tuples of positive integers such that the following divisibility conditions hold for them: 
\[m_i\mid n_i \quad\mbox{ for }\quad i=1,\dots,r,\qquad n_{i+1}\mid m_i\quad \mbox{ for }\quad i=1,\dots,r-1.\] 
Then the following inequality holds: 
\begin{equation}\label{eq:inequality} 
\sum_{i=1}^r(n_i-1)\ge \sum_{i=1}^r(m_i-1)+\prod_{i=1}^r\frac{n_i}{m_i}-1. 
\end{equation}
Moreover, equality holds in \eqref{eq:inequality} if and only if there exists a $j\in\{1,2,\dots,r\}$ such that $m_1=n_1$,,\dots,$m_j=n_j$, 
$m_{j+1}=n_{j+2}$, $m_{j+2}=n_{j+3}$,  ... (where we mean that $n_{r+1}=1$).  
\end{lemma}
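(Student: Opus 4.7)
The plan is to reduce~\eqref{eq:inequality} to a clean telescoping estimate and exploit the two chains of divisibilities in tandem. Setting $q_i := n_i/m_i\in\mn$ (well-defined because $m_i\mid n_i$), the target inequality is equivalent to
\[
1 + \sum_{i=1}^r (q_i - 1) m_i \ \ge\ \prod_{i=1}^r q_i.
\]

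First I would verify, by a one-line induction on $r$, the purely algebraic telescoping identity
\[
\prod_{i=1}^r q_i - 1 \ =\ \sum_{k=1}^r (q_k - 1) \prod_{i=k+1}^r q_i,
\]
with the convention $\prod_{i=r+1}^r q_i := 1$. Next, I would exploit the divisibilities $n_{i+1}\mid m_i$ to write $m_i \ge n_{i+1} = q_{i+1} m_{i+1}$, and iterate this downward together with $m_r\ge 1$ to obtain the key chain estimate
\[
m_k \ \ge\ \prod_{i=k+1}^r q_i \qquad (1 \le k \le r).
\]
Since $q_k - 1 \ge 0$, multiplying termwise and summing over $k$ yields
\[
\sum_{k=1}^r (q_k - 1) m_k \ \ge\ \sum_{k=1}^r (q_k - 1) \prod_{i=k+1}^r q_i \ =\ \prod_{i=1}^r q_i - 1,
\]
which, after rewriting $(q_k-1)m_k=n_k-m_k$, is exactly~\eqref{eq:inequality}.

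For the equality clause, the chain of estimates expresses the difference of the two sides of~\eqref{eq:inequality} as a sum of nonnegative contributions $(q_k-1)\bigl(m_k-\prod_{i>k}q_i\bigr)$, so equality forces, for each $k$, either $q_k=1$ (equivalently $m_k=n_k$) or $m_k=\prod_{i>k}q_i$. In the latter case every intermediate inequality $m_i \ge q_{i+1}m_{i+1}$ with $i\ge k$ and $m_r \ge 1$ must itself be sharp, which yields the ``shifted tail'' $m_k=n_{k+1},\ m_{k+1}=n_{k+2},\dots,m_{r-1}=n_r,\ m_r=1$. Letting $j$ be the largest index with $m_i=n_i$ for all $i\le j$ then puts $(m_1,\dots,m_r)$ in the form asserted in the lemma.

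I expect the inequality itself to drop out painlessly once the telescoping identity and the iterated chain estimate are in place. The main obstacle is the equality analysis: one must combine the per-index alternatives into a single global ``prefix agrees, tail is shifted'' description, taking care that intermediate indices may satisfy both conditions simultaneously (which happens precisely when consecutive $n_i$'s coincide), so extracting the cutoff index $j$ cleanly requires a little bookkeeping.
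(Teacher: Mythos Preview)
Your proposal is correct and takes a genuinely different route from the paper. The paper argues by case analysis: it first strips off any leading indices with $n_i=m_i$, then observes that the divisibility chain forces $\prod_i(n_i/m_i)$ to divide $n_1$, and splits according to whether this product equals $n_1$ (the equality case, handled directly) or is a proper divisor of $n_1$ (where a crude bound $n_1-1\ge 2\cdot\tfrac{n_1}{p}-1$ with $p$ the least prime divisor of $n_1$ does the job). Your approach is more structural: the telescoping identity $\prod q_i-1=\sum_k(q_k-1)\prod_{i>k}q_i$ together with the iterated estimate $m_k\ge\prod_{i>k}q_i$ immediately expresses the defect in~\eqref{eq:inequality} as $\sum_k(q_k-1)\bigl(m_k-\prod_{i>k}q_i\bigr)\ge 0$, a sum of manifestly nonnegative terms. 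This buys you a cleaner and essentially one-line proof of the inequality, and also a sharper grip on the equality case, since you can read off exactly which intermediate inequalities must be tight rather than reconstructing them from the case split. One small remark on the equality clause: your cutoff index $j$ may be $0$ (e.g.\ $r=1$, $n_1>1$, $m_1=1$), which your argument handles correctly but which the lemma as stated does not literally cover; this is a minor imprecision in the lemma's formulation, not in your proof.
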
 

\begin{proof} When $r=1$, \eqref{eq:inequality} becomes $n_1-1\ge (m_1-1)+\frac{n_1}{m_1}-1$, which is equivalent to 
the obvious $(m_1-1)(\frac{n_1}{m_1}-1)\ge 0$. Assume from now on that $r>1$. If $n_1=m_1$, then we may omit them and deal with 
the sequences $(n_2,\dots,n_r)$ and $(m_2,\dots,m_r)$, since the inequality \eqref{eq:inequality} for these shorter sequences is obviously equivalent to the corresponding inequality for the original sequences. So from now on we assume that $n_1>m_1$, that is, $m_1$ is a proper divisor of $n_1$. 

The conditions imply that $\prod_{i=1}^r\frac{n_i}{m_i}$ divides  $n_1$, and $\prod_{i=1}^r\frac{n_i}{m_i}=n_1$  if and only if $m_r=1$, $n_r=m_{r-1}$, $\dots$, $n_2=m_1$. 
Assume first that these equalities hold. Then we have 
\[\sum_{i=1}^r(n_i-1)= \prod_{i=1}^r\frac{n_i}{m_i}-1+\sum_{i=2}^r(n_i-1)=\prod_{i=1}^r\frac{n_i}{m_i}-1+\sum_{i=1}^{r-1}(m_i-1)\] 
and taking into account that $m_r=1$, we see that \eqref{eq:inequality} holds with equality in this case. 
Suppose finally that $\prod_{i=1}^r\frac{n_i}{m_i}$ is a proper divisor of $n_1$. Let $p$ be a minimal prime divisor of $n_1$. 
Then $\prod_{i=1}^r\frac{n_i}{m_i}\le \frac{n_1}p$ and $m_1\le \frac{n_1}p$. 
Consequently we have 
\[n_1-1=\frac{n_1}p\cdot p-1\ge \frac{n_1}p+\frac{n_1}p-1\ge \prod_{i=1}^r\frac{n_i}{m_i}+m_1-1>\prod_{i=1}^r\frac{n_i}{m_i}-1 +(m_1-1).\] 
Since for $i=2,\dots r$ we have $n_i-1\ge m_i-1$, we conclude \eqref{eq:inequality}. 
 \end{proof} 

Lemma 4.1 in \cite{Gr-Ma-Or09} (see also Exercise 1.6 in \cite{Gr13a}) asserts that $\mathsf d^*(G)\ge \mathsf d^*(H)+\mathsf d^*(G/H)$ for any subgroup $H$ of $G$. 
In Lemma~\ref{lemma:cyclicfactor} we provide a detailed proof of the special case when $G/H$ is cyclic, yielding also a characterization of the case when equality holds. 

\begin{lemma}\label{lemma:cyclicfactor}
Let $H$ be a proper subgroup of $G$ such that the factor group $G/H$ is cyclic. 
Then $\mathsf d^*(G)\ge \mathsf d^*(H)+[G:H]-1$, with equality  only if 
$\rank(H)=\rank(G)-1$, and $H\cong \bigoplus_{i\in \{1,\dots,r\}\setminus\{j\}} C_{n_i}$ for some $j\in \{1,\dots,r\}$.   
\end{lemma}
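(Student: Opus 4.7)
The plan is to place ourselves in the setting of Lemma~\ref{lemma:inequality}. Writing $H \cong \bigoplus_{i=1}^r C_{m_i}$ in invariant-factor form $m_r \mid m_{r-1} \mid \dots \mid m_1$ (with $m_i = 1$ for $i > \mathrm{rank}(H)$), we have $\prod_{i} n_i/m_i = [G:H]$, so Lemma~\ref{lemma:inequality} would yield
\[
\mathsf d^*(G) \;\geq\; \mathsf d^*(H) + [G:H] - 1
\]
together with a sharp characterization of equality, provided we verify its two divisibility hypotheses. The condition $m_i \mid n_i$ is a standard consequence of the structure theorem for subgroups of finite abelian groups, so the only real work is establishing $n_{i+1} \mid m_i$ for $i=1,\dots,r-1$; this is where cyclicity of $G/H$ enters, and it is the main obstacle.

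For the divisibility $n_{i+1}\mid m_i$ I would first decompose into primary components: writing $G = \bigoplus_p G_p$ and $H = \bigoplus_p H_p$, the hypothesis that $G/H$ is cyclic is equivalent to each $G_p/H_p$ being a cyclic $p$-group, and both divisibilities decouple prime by prime. One may therefore assume that $G$ is a $p$-group with invariants $p^{a_1}\geq\dots\geq p^{a_r}$ and $H$ has invariants $p^{b_1}\geq\dots\geq p^{b_r}$ (padded with zeros). Setting $r_k(A):=\#\{i:c_i\geq k\}$ for a $p$-group $A$ with invariants $p^{c_i}$, the claim $b_i\geq a_{i+1}$ for all $i$ is equivalent to $r_k(G)-r_k(H)\leq 1$ for every $k\geq 1$.

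The latter bound admits a short argument. A direct count based on $G[p^{k-1}]\cap H[p^k]=H[p^{k-1}]$ yields the identity
\[
r_k(G)-r_k(H) \;=\; \dim_{\mathbb{F}_p}\!\bigl(G[p^k]/(G[p^{k-1}]+H[p^k])\bigr).
\]
Now $G[p^k]/H[p^k]$ embeds into $G/H$ (since $H[p^k]=H\cap G[p^k]$), hence is cyclic, and therefore so is any quotient of it, in particular $G[p^k]/(G[p^{k-1}]+H[p^k])$. On the other hand $p\cdot G[p^k]\subseteq G[p^{k-1}]$, so this same quotient has exponent $p$. A cyclic group of exponent $p$ has $\mathbb{F}_p$-dimension at most $1$, which is precisely the required bound.

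With both divisibilities in hand, Lemma~\ref{lemma:inequality} yields the desired inequality, and its equality characterization forces $(m_i)$ to be obtained from $(n_i)$ by deleting a single entry (with the convention $n_{r+1}=1$); in particular $m_r=1$, so $\mathrm{rank}(H)=r-1$, and $H\cong\bigoplus_{i\in\{1,\dots,r\}\setminus\{j\}} C_{n_i}$ for the appropriate $j\in\{1,\dots,r\}$, as claimed.
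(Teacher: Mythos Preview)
Your proof is correct and follows the same overall plan as the paper: reduce to Lemma~\ref{lemma:inequality} by passing to Sylow subgroups and verifying the two divisibility hypotheses $m_i\mid n_i$ and $n_{i+1}\mid m_i$. The difference lies in how the second divisibility is established. The paper appeals to the combinatorics of abelian $p$-groups via Littlewood--Richardson coefficients: a subgroup of type $\mu$ in a $p$-group of type $\lambda$ has cyclic quotient precisely when $c^{\lambda}_{\mu,(d)}\ne 0$, and Pieri's rule (cited from Macdonald) translates this into the interlacing conditions $\lambda_{i+1}\le\mu_i\le\lambda_i$. Your argument replaces this with a direct torsion-subgroup count: the identity $r_k(G)-r_k(H)=\dim_{\mathbb F_p}\bigl(G[p^k]/(G[p^{k-1}]+H[p^k])\bigr)$ together with the observation that this quotient is simultaneously cyclic (as a subquotient of $G/H$) and of exponent $p$. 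Your route is more self-contained, avoiding any reference to symmetric-function machinery; the paper's route is shorter to state once one is willing to quote the Hall-algebra/Pieri result. Both yield the same equality analysis via Lemma~\ref{lemma:inequality}, and your reading of the equality case (deletion of a single invariant factor, forcing $m_r=1$ since $H$ is proper) is correct.
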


\begin{proof}  
Take a finite abelian $p$-group $A$. It is isomorphic to $C_{p^{\lambda_1}}\oplus \dots\oplus C_{p^{\lambda_k}}$ where 
$\lambda_1\ge \dots\ge\lambda_k>0$. We call the partition $\lambda=(\lambda_1,\dots,\lambda_k)$ the {\it type } of $A$. Any subgroup $B$ of $A$ has type $\mu=(\mu_1,\dots,\mu_k)$, 
 $\mu_1\ge \dots\ge\mu_k\ge 0$, where $\mu_i\le\lambda_i$ for $i=1,\dots,k$. 
Moreover, $A$ has a subgroup $B$ of type $\mu$ such that the factor group $A/B$ is cyclic (necessarily of order $p^d$, where  $d:=\sum(\lambda_i-\mu_i)$) 
if and only if  the Littlewood-Richardson coefficient $c^{\lambda}_{\mu,(d)}$ is non-zero (see for example  II.4.3  in \cite{macdonald}), 
and by Pieri's rules (see  for example   I.5.16 in \cite{macdonald} ) this happens if and only if the additional  inequalities 
$\mu_1\ge \lambda_2$, $\mu_2\ge \lambda_3,$, $\dots$, $\mu_{r-1}\ge\lambda_r$ hold as well. 

Note that both $G$ and $H$ are the direct products of their unique Sylow subgroups. Therefore it follows from the above paragraph that the subgroup $H$ 
is isomorphic to $H\cong C_{m_1}\oplus \dots \oplus C_{m_r}$ where $1\le m_r\mid m_{r-1}\mid \dots \mid m_1$, and $m_i\mid n_i$ for $i=1,\dots,r$. 
Moreover, since $G/H$ is cyclic, for any prime $p$ the factor of the Sylow $p$-subgroup of $G$ modulo the Sylow $p$-subgroup of $H$ is cyclic, 
so again by the above paragraph the conditions $n_{i+1}\mid m_i$ for $i=1,\dots,r-1$ hold as well.  This means that the assumptions of Lemma~\ref{lemma:inequality} hold for the $r$-tuples $(n_1,\dots,n_r)$, $(m_1,\dots,m_r)$. Thus by Lemma~\ref{lemma:inequality} the inequality \eqref{eq:inequality} holds, which is the same as the desired inequality in our statement. 
\end{proof} 

Given the finite abelian groups $G,H,K$, there exists a subgroup $G_1$ of $G$ such that $G_1\cong  H$ and $G/G_1\cong K$ if and only if there exists a subgroup $G_2$ of $G$ with $G_2\cong K$ and $G/G_2\cong H$. Therefore Lemma~\ref{lemma:cyclicfactor} has its dual form as well: 

\begin{lemma}\label{lemma:dual} 
Let $K$ be a nontrivial cyclic subgroup of $G$. 
Then $\mathsf d^*(G)\ge \mathsf d^*(G/K)+|K|-1$, with equality only if  
$\rank(G/K)=\rank(G)-1$,  and for some $j\in \{1,\dots,r\}$ we have  $G/K\cong \bigoplus_{i\in \{1,\dots,r\}\setminus\{j\}} C_{n_i}$.   
\end{lemma}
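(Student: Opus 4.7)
The plan is to derive Lemma~\ref{lemma:dual} as a formal consequence of Lemma~\ref{lemma:cyclicfactor}, by applying the self-duality principle for finite abelian groups recalled immediately before the statement.

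Concretely, given a nontrivial cyclic subgroup $K\le G$, set $H:=G/K$. By the duality principle there exists a subgroup $G_1\le G$ with $G_1\cong H$ and $G/G_1\cong K$. Since $K$ is nontrivial, $G_1$ is a proper subgroup of $G$; since $K$ is cyclic, the factor group $G/G_1$ is cyclic. Thus $G_1$ satisfies the hypotheses of Lemma~\ref{lemma:cyclicfactor}.

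Applying that lemma to $G_1$, and using that $\mathsf d^*$ depends only on the isomorphism type and $[G:G_1]=|G/G_1|=|K|$, we obtain
\[\mathsf d^*(G)\ge \mathsf d^*(G_1)+[G:G_1]-1 = \mathsf d^*(G/K)+|K|-1,\]
which is the desired inequality. If equality holds, then the equality clause in Lemma~\ref{lemma:cyclicfactor} forces $\rank(G_1)=r-1$ and $G_1\cong \bigoplus_{i\in\{1,\dots,r\}\setminus\{j\}} C_{n_i}$ for some $j\in\{1,\dots,r\}$. Transporting via the isomorphism $G_1\cong G/K$ yields precisely the structural conclusion for $G/K$ stated in the lemma.

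The only ingredient beyond Lemma~\ref{lemma:cyclicfactor} is the duality principle itself, which is a standard consequence of the structure theorem: inside the character group $\widehat G\cong G$, the annihilator correspondence $G_1\mapsto G_1^{\perp}$ induces a bijection between subgroups $G_1\le G$ with $G_1\cong H$ and subgroups $G_2\le G$ with $G/G_2\cong H$, for every finite abelian group $H$. Accordingly I anticipate no substantive obstacle: the whole argument is just a dualization of Lemma~\ref{lemma:cyclicfactor}.
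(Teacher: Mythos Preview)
Your proposal is correct and is exactly the approach the paper takes: the paper does not give a separate proof of Lemma~\ref{lemma:dual} but simply records the duality principle (existence of $G_1\le G$ with $G_1\cong G/K$ and $G/G_1\cong K$) and declares the lemma to be the dual form of Lemma~\ref{lemma:cyclicfactor}. Your write-up just makes this deduction explicit.
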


\begin{corollary}\label{cor:cyclicseries} 
Let $a_1,\dots,a_k$ be a sequence of elements generating $G$, and for $i=1,\dots ,k$ denote by $d_i$ the order of $a_i$ modulo the subgroup $\langle a_1,\dots,a_{i-1}\rangle$ (so $d_1\dots d_k=|G|$). 
Then $\mathsf d^*(G)\ge \sum_{i=1}^k (d_i-1)$, with equality only if the multiset $\{n_1,\dots,n_r\}$ coincides with the multiset $\{d_{i_1},\dots,d_{i_r}\}$ obtained by omitting all 
occurrances of $1$ in the sequence $d_1,\dots,d_k$. 
\end{corollary}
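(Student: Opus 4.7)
The plan is to proceed by induction on $k$, using Lemma~\ref{lemma:dual} to peel off the cyclic subgroup $K:=\langle a_1\rangle$ and then apply the inductive hypothesis to the quotient $G/K$, which is generated by $\bar a_2,\dots,\bar a_k$.

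First I would reduce to the case where every $d_i$ is strictly greater than $1$. If $d_i=1$ then $a_i\in\langle a_1,\dots,a_{i-1}\rangle$, so removing $a_i$ from the sequence leaves both the generated subgroup chain and the remaining numbers $d_j$ unchanged, while contributing $0$ to $\sum(d_i-1)$. Thus we may assume $d_1,\dots,d_k>1$, in which case $k\ge r=\rank(G)$. The base case $k=1$ is immediate: then $G=\langle a_1\rangle$ is cyclic with $r=1$, $n_1=d_1=|G|$, and both sides equal $|G|-1$.

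For the inductive step ($k\ge 2$), set $K=\langle a_1\rangle$, a nontrivial cyclic subgroup of order $d_1$. The images $\bar a_2,\dots,\bar a_k$ generate $G/K$, and since $\langle\bar a_2,\dots,\bar a_{i-1}\rangle = \langle a_1,\dots,a_{i-1}\rangle/K$, the order of $\bar a_i$ modulo $\langle\bar a_2,\dots,\bar a_{i-1}\rangle$ is exactly $d_i$. The inductive hypothesis applied to $G/K$ therefore yields
\[
\mathsf d^*(G/K)\ge \sum_{i=2}^{k}(d_i-1),
\]
and combining with Lemma~\ref{lemma:dual} gives
\[
\mathsf d^*(G)\ge \mathsf d^*(G/K)+|K|-1\ge \sum_{i=1}^{k}(d_i-1),
\]
as required.

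The most delicate step is tracking the equality case. If equality holds in the displayed inequality, then it must hold in each of the two inequalities above. The equality clause of Lemma~\ref{lemma:dual} forces $\rank(G/K)=r-1$ and $G/K\cong\bigoplus_{i\in\{1,\dots,r\}\setminus\{j\}}C_{n_i}$ for some $j$; the equality clause of the inductive hypothesis forces the multiset $\{d_2,\dots,d_k\}$ (all entries $>1$ by reduction) to coincide with the multiset of invariant factors of $G/K$, namely $\{n_i:i\ne j\}$. Comparing orders, $d_1=|K|=|G|/|G/K|=n_j$, so $\{d_1,\dots,d_k\}=\{n_1,\dots,n_r\}$ as multisets, which after re-inserting the discarded $1$'s is precisely the assertion in the corollary.
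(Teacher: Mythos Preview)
Your argument is correct. It is essentially the dual of the paper's own proof: the paper inducts by peeling off the \emph{last} generator, setting $H=\langle a_1,\dots,a_{k-1}\rangle$ and invoking Lemma~\ref{lemma:cyclicfactor} (subgroup with cyclic quotient), whereas you peel off the \emph{first} generator, setting $K=\langle a_1\rangle$ and invoking the dual Lemma~\ref{lemma:dual} (cyclic subgroup) applied to $G/K$. The paper itself notes that Lemmas~\ref{lemma:cyclicfactor} and~\ref{lemma:dual} are dual to one another, so the two routes are interchangeable. Your preliminary reduction to $d_i>1$ for all $i$ replaces the paper's in-induction handling of the case $H=G$ (i.e., $d_k=1$); both devices serve the same purpose and neither buys anything the other does not.
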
 
\begin{proof} 
Apply induction for $k$. The case $k=1$ is obvious, since then $G$ is cyclic of order $d_1$, so $r=1$, $n_1=d_1$,  and $\mathsf d^*(G)=d_1-1=n_1-1$. 
Assume next that $k>1$, and set $H:=\langle a_1,\dots,a_{k-1}\rangle$. 
If $H=G$, then $d_k=1$, $\mathsf d^*(G)=\mathsf d^*(H)$, and the statement follows by the induction hypothesis applied to $H$. If $H$ is a proper subgroup of $G$, then 
$d_k>1$. By Lemma~\ref{lemma:cyclicfactor} we have $\mathsf d^*(G)\ge \mathsf d^*(H)+d_k-1$, with equality only if   
$H\cong \bigoplus_{i\in \{1,\dots,r\}\setminus\{j\}} C_{n_i}$ for some $j\in \{1,\dots,r\}$, implying also $n_j=d_k$. Now we may conclude by applying the induction hypothesis for $H$ and the sequence $a_1,\dots,a_{k-1}$. 
\end{proof}

We shall use the following terminology. Given an ordered sequence $a_1,\dots,a_k\in G$ of elements generating $G$, any $b\in G$ can be uniquely written as 
\begin{equation}\label{eq:normalform} 
b=\sum_{i=1}^k l_ia_i,\qquad 0\le l_i\le d_i-1\mbox{ for }i=1,\dots,k
\end{equation} 
where $d_i$ denotes the smallest positive integer $d$ such that $da_i$ belongs to the subgroup $\langle a_1,\dots,a_{i-1}\rangle$.  
Indeed, consider the chain 
\[\{0\}\subset \langle a_1\rangle\subset \langle a_1,a_2\rangle \subset \dots \subset \langle a_1,\dots,a_k\rangle=G \] 
of subgroups. It has cyclic factors of order $d_1,\dots,d_k$. The factor $G/\langle a_1,\dots,a_{k-1}\rangle$ is generated by the coset of $a_k$, hence 
$b+\langle a_1,\dots,a_{k-1}\rangle =l(a_k+\langle a_1,\dots,a_{k-1}\rangle)$ for a unique $0\le l\le d_k-1$. Now continue in the same way with the element 
$b-la_k$ in the group $\langle a_1,\dots,a_{k-1}\rangle$.  
We shall refer to \eqref{eq:normalform} as the {\it normal form} of $b$ with respect to $a_1,\dots,a_k$, and we call 
$\sum_{i=1}^k(d_i-1-l_i)$ the {\it deficit of $b$}. 
 
\begin{lemma}\label{lemma:d+1}
Let $a_1,\dots,a_k$ be an arbitrary sequence of elements in $G$, and denote by $d_k$ the order of $a_k$ modulo the subgroup 
$\langle a_1,\dots,a_{k-1} \rangle$. 
 Then there exists an  $m=(m_1,\dots,m_k)\in \mathcal{B}(a_1,\dots,a_k)$ such that  $m_k=d_k$ and 
 $|m|=m_1+\dots+m_k\le \mathsf d^*(G)+1$. 
 \end{lemma}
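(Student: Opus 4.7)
The plan is to exploit the normal-form expansion defined just before the statement: since $d_k a_k$ lies in $H := \langle a_1, \dots, a_{k-1}\rangle$ by definition of $d_k$, the element $-d_k a_k$ admits a unique normal-form expansion with respect to $a_1, \dots, a_{k-1}$. Defining $d_i$, for $i = 1, \dots, k-1$, as the order of $a_i$ modulo $\langle a_1,\dots,a_{i-1}\rangle$ (so that $d_i = 1$ exactly when $a_i$ is redundant in the chain), I would write
\[
-d_k a_k \;=\; \sum_{i=1}^{k-1} l_i a_i, \qquad 0 \le l_i \le d_i - 1.
\]
Then $m := (l_1, \dots, l_{k-1}, d_k)$ satisfies $\sum_{i=1}^k m_i a_i = 0$, so $m \in \mathcal{B}(a_1, \dots, a_k)$, and by construction $m_k = d_k$.

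The length estimate is the second step. From $0 \le l_i \le d_i - 1$ we obtain
\[
|m| \;=\; d_k + \sum_{i=1}^{k-1} l_i \;\le\; 1 + \sum_{i=1}^{k} (d_i - 1),
\]
so it remains to check that $\sum_{i=1}^k (d_i - 1) \le \mathsf d^*(G)$. For this I would invoke Corollary~\ref{cor:cyclicseries} applied to the subgroup $G' := \langle a_1, \dots, a_k\rangle$, which the $a_i$'s generate tautologically (the redundant $a_i$'s contribute $d_i - 1 = 0$ and do not affect the statement of the corollary). This yields $\sum_{i=1}^k (d_i - 1) \le \mathsf d^*(G')$, and combined with the monotonicity $\mathsf d^*(G') \le \mathsf d^*(G)$ -- a trivial consequence of the subgroup-quotient inequality $\mathsf d^*(G) \ge \mathsf d^*(H) + \mathsf d^*(G/H)$ recalled just before Lemma~\ref{lemma:cyclicfactor} -- the bound $|m| \le \mathsf d^*(G) + 1$ follows.

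I do not anticipate any substantial obstacle: the whole argument is essentially a direct assembly of the normal-form construction with Corollary~\ref{cor:cyclicseries}. The only subtlety is that $a_1,\dots,a_k$ is not assumed to generate $G$, which is dealt with by working inside $\langle a_1,\dots,a_k\rangle$ and appealing to monotonicity of $\mathsf d^*$ under taking subgroups.
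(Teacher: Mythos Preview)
Your proof is correct and follows essentially the same route as the paper's own argument: write $-d_k a_k$ in normal form with respect to $a_1,\dots,a_{k-1}$, set $m=(l_1,\dots,l_{k-1},d_k)$, and bound $|m|$ via Corollary~\ref{cor:cyclicseries}. In fact you are slightly more careful than the paper, which invokes Corollary~\ref{cor:cyclicseries} directly without explicitly noting that the $a_i$ need not generate $G$; your detour through $G'=\langle a_1,\dots,a_k\rangle$ and the monotonicity $\mathsf d^*(G')\le\mathsf d^*(G)$ fills that small gap.
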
 
 
 \begin{proof} The group $\langle a_1,\dots,a_{k-1}\rangle$ contains $d_ka_k$. Set  $m:=(l_1,\dots,l_{k-1},d_k)$ where  $-d_ka_k=\sum_{i=1}^{k-1}l_ia_i$ is the normal form of $-d_ka_k$ with respect to $a_1,\dots,a_{k-1}$. 
Then $m$  belongs to 
$\mathcal{B}(a_1,\dots,a_k)$, and $|m|\le \sum_{i=1}^{k-1}(d_i-1)+d_k\le \mathsf d^*(G)+1$, where the last inequality holds by Corollary~\ref{cor:cyclicseries}. 
\end{proof}

 \begin{lemma} \label{lemma:distinct} Suppose that $a_1,\dots,a_k$ are distinct non-zero elements in $G$, and denote by $d_i$ the order of $a_i$ modulo $\langle a_1,\dots,a_{i-1}\rangle$ for $i=1,\dots,k$.  
 If there is no $m\in  \mathcal{B}(a_1,\dots,a_k)$ such that 
 $|m|\le \mathsf d^*(G)$ and $m_k=d_k$, then either $k=1$ and $G=\langle a_1\rangle$, or 
 $G= \langle a_1,\dots,a_{k-1}\rangle$,  the multiset $\{d_1,\dots,d_{k-1}\}$ coincides with the multiset $\{n_1,\dots,n_r\}$ (so in particular $k-1=r$ is the rank of $G$), and the deficit of $-a_k$ with respect to $a_1,\dots,a_{k-1}$ is zero. 
 \end{lemma}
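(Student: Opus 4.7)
The plan is to apply Lemma~\ref{lemma:d+1} and then exploit the rigidity of its equality case. Set $H:=\langle a_1,\dots,a_{k-1}\rangle$ and $K:=\langle a_1,\dots,a_k\rangle$. The proof of Lemma~\ref{lemma:d+1} combined with Corollary~\ref{cor:cyclicseries} produces $m=(l_1,\dots,l_{k-1},d_k)\in\mathcal{B}(a_1,\dots,a_k)$, where $-d_ka_k=\sum_{i<k}l_ia_i$ is the normal form in $H$, satisfying
\[
|m|\;=\;\sum_{i<k}l_i+d_k\;\le\;\sum_{i<k}(d_i-1)+d_k\;\le\;\mathsf d^*(K)+1\;\le\;\mathsf d^*(G)+1.
\]
The hypothesis forces $|m|=\mathsf d^*(G)+1$, so all three inequalities are equalities, yielding: (a) $l_i=d_i-1$ for every $i<k$ (the deficit of $-d_ka_k$ vanishes); (b) by the equality case of Corollary~\ref{cor:cyclicseries}, the multiset $\{d_i:i\le k,\,d_i>1\}$ coincides with the invariant multiset of $K$; and (c) $\mathsf d^*(K)=\mathsf d^*(G)$, forcing $K=G$ by the strict monotonicity of $\mathsf d^*$ under proper inclusion (an iterated consequence of Lemma~\ref{lemma:cyclicfactor}). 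The case $k=1$ is immediate: $d_1>\mathsf d^*(G)$ and $d_1\le n_1$ compel $r=1$ and $d_1=n_1$, so $G=\langle a_1\rangle$.

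For $k\ge 2$, distinctness excludes redundant indices $j<k$ with $d_j=1$. Indeed, such a $j$ would give a normal form $a_j=\sum_{i<j}c_ia_i$ with $\sum c_i\ge 2$ (the options $\sum c_i\in\{0,1\}$ being ruled out by $a_j\ne 0$ and the distinctness of the $a_i$'s), and subtracting the zero-sum $(c_1,\dots,c_{j-1},-1,0,\dots,0)\in\mathcal{G}(a_1,\dots,a_{k-1})$ from the first $k-1$ coordinates of $m$ would produce $m'\in\mathcal{B}(a_1,\dots,a_k)$ with $m'_k=d_k$ and $|m'|=|m|+1-\sum c_i\le\mathsf d^*(G)$, contradicting the hypothesis.

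It remains to prove $d_k=1$. Suppose instead $d_k>1$; then $H\subsetneq G$ with $G/H\cong C_{d_k}$, and the equality case of Lemma~\ref{lemma:cyclicfactor} gives $H\cong\bigoplus_{l\ne j^*}C_{n_l}$ with $d_k=n_{j^*}$ for some $j^*$. By the no-redundancy step together with (b), $k-1=r-1$ and $a_1,\dots,a_{k-1}$ form an ordered basis of $H$ that is chain-compatible with the invariant-factor decomposition. Writing $a_k=h+e$ with $h\in H$ and $e\in G$ lifting a generator of $G/H$ with $n_{j^*}e=0$, the deficit-zero relation (a) rewrites as $n_{j^*}h=-\sigma$ for $\sigma:=\sum_{i<k}(d_i-1)a_i\in H$, forcing $\sigma\in n_{j^*}H$. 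Reducing modulo $n_{j^*}H$ and using that each $d_i=n_{l(i)}$ (so $d_i-1\equiv -1\pmod{\gcd(n_{l(i)},n_{j^*})}$), the chain-basis structure shows $\bar\sigma=-\sum_{i<k}\bar a_i$ is nonzero in $H/n_{j^*}H$, contradicting $\sigma\in n_{j^*}H$. Thus $d_k=1$, $H=G$, and the full conclusion follows. The main obstacle is verifying this nonvanishing of $\bar\sigma$ in $H/n_{j^*}H$: it relies on translating the chain structure of $a_1,\dots,a_{k-1}$ into a basis for the quotient via the invariant-factor decomposition of $H$ and the divisibility $n_r\mid\cdots\mid n_1$.
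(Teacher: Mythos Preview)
Your argument up through the exclusion of redundant indices $j<k$ with $d_j=1$ is correct and essentially follows the paper. The gap is in your final paragraph, where you attempt to rule out $d_k>1$.

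Your plan there is to deduce $\sigma:=\sum_{i<k}(d_i-1)a_i\in n_{j^*}H$ from the splitting $a_k=h+e$, and then to reach a contradiction by arguing that $\bar\sigma\ne 0$ in $H/n_{j^*}H$. Several steps in this are not justified: you assert that $a_1,\dots,a_{k-1}$ form an ``ordered basis chain-compatible with the invariant-factor decomposition'' of $H$, but the equality case of Corollary~\ref{cor:cyclicseries} only gives that the multiset $\{d_1,\dots,d_{k-1}\}$ matches the invariant factors, not that the $a_i$ themselves split $H$; you then need $d_ia_i\in n_{j^*}H$ to get $(d_i-1)\bar a_i=-\bar a_i$, which does not follow from $d_ia_i\in\langle a_1,\dots,a_{i-1}\rangle$ without a purity argument you have not supplied; and finally the nonvanishing of $-\sum_{i<k}\bar a_i$ is left as an ``obstacle'' that you do not resolve. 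As written, the argument is incomplete.

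The paper bypasses all of this with a one-line rank contradiction. Since $d_1$ is the \emph{order} of $a_1$ (not merely its order modulo a subgroup), one has $d_1a_1=0$, so $(d_1-1)a_1=-a_1$. Rearranging the deficit-zero relation
\[
-d_ka_k=\sum_{i=1}^{k-1}(d_i-1)a_i
\]
gives $-a_1=(d_1-1)a_1=-d_ka_k-\sum_{i=2}^{k-1}(d_i-1)a_i\in\langle a_2,\dots,a_k\rangle$. Hence $G=\langle a_1,\dots,a_k\rangle=\langle a_2,\dots,a_k\rangle$ is generated by $k-1=r-1$ elements, contradicting $\rank(G)=r$. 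This both avoids any splitting of $G$ over $H$ and requires no analysis of $H/n_{j^*}H$. You should replace your last paragraph with this observation.
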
 
\begin{proof} 
Suppose that $k>1$ and for the  $m$ constructed in the proof of  Lemma~\ref{lemma:d+1} we have $|m|=\mathsf d^*(G)+1$, so 
\begin{equation}\label{eq:dkak}-d_ka_k=\sum_{i=1}^{k-1}(d_i-1)a_i
\end{equation}
and $\mathsf d^*(G)=\sum_{j=1}^k(d_j-1)$.  
Assume first that for some $i\in \{1,\dots,k-1\}$ we have $d_i=1$. Then $a_i\in \langle a_1,\dots,a_{i-1}\rangle$, so we may write 
\begin{equation}\label{eq:liai}
a_i=l_1a_1+\dots+l_{i-1}a_{i-1}
\end{equation} 
in its normal form with respect to $a_1,\dots,a_{i-1}$. 
Equations \eqref{eq:dkak} and \eqref{eq:liai} imply  
\[-d_ka_k=\sum_{j=1}^{i-1}(d_j-1-l_j)+a_i+\sum_{q=i+1}^{k-1}(d_q-1)a_q.\] 
Setting $m':=(d_1-1-l_1,\dots,d_{i-1}-1-l_{i-1},1,d_{i+1}-1,\dots,d_{k-1}-1,d_k)$ we get that $m'\in \mathcal{B}(a_1,\dots.a_k)$. 
Moreover, as $a_1,\dots,a_i$ are distinct, we have that $l_1+\dots+l_{i-1}\ge 2$, hence  
$|m'|=\sum_{i=1}^{k}(d_i-1)+2-(l_1+\dots+l_{i-1})\le \sum_{j=1}^k(d_j-1)=\mathsf d^*(G)$.  So we found an $m'\in \mathcal{B}(a_1,\dots,a_k)$ with $m'_k=d_k$ and 
$|m'|\le \mathsf d^*(G)$. 

It remains to deal with the case when $d_1,\dots,d_{k-1}$ are all greater than $1$. Suppose first that $H:=\langle a_1,\dots,a_{k-1}\rangle \subsetneq G$. 
If $d_k=1$, then $\langle a_1,\dots,a_k\rangle =H\subsetneq G$, and by Lemma~\ref{lemma:d+1} there exists an $m\in \mathcal{B}(a_1,\dots,a_k)$ with $m_k=d_k$ and 
$|m|\le \mathsf d^*(H)+1\le \mathsf d^*(G)$. If 
$d_i>1$ for all  $i=1,\dots,k$,   by Corollary~\ref{cor:cyclicseries} the equality $\mathsf d^*(G)=\sum_{j=1}^k(d_j-1)$ implies that the multiset 
$\{d_1,\dots,d_k\}$ coincides with $\{n_1,\dots,n_r\}$. In particular, $k=r$ is the rank of $G$. However, since $d_1$ is the order of $a_1$, 
equation \eqref{eq:dkak} implies that  $a_1$ is contained in $\langle a_2,\dots,a_k\rangle$. Thus $G$ can be generated by $k-1=r-1$ elements. This is a contradiction, so this case does not occur. 
Finally, if $\langle a_1,\dots,a_{k-1}\rangle =G$, then $d_k=1$, and  \eqref{eq:dkak} becomes $-a_k=\sum_{i=1}^{k-1}(d_i-1)a_i$, so the deficit of $-a_k$ is zero. 
Moreover, the equality  $\mathsf d^*(G)=\sum_{j=1}^k(d_j-1)=\sum_{j=1}^{k-1}(d_j-1)$ implies by Corollary~\ref{cor:cyclicseries} that the multisets 
$\{d_1,\dots,d_{k-1}\}$ and $\{n_1,\dots,n_r\}$ coincide, finishing the proof. 
 \end{proof}
 
 \begin{lemma}\label{lemma:23} 
Let $a_1,\dots.,a_k$ be a sequence of elements of a non-cyclic group $G$, and denote by $g_i$ the order of $a_i$ modulo the subgroup $\langle a_1,\dots,a_{i-1},a_{i+1},\dots, a_k\rangle$. Assume that the following hold: 
 \begin{itemize}
 \item[(a)] There does not exist an $m=(m_1,\dots,m_k)\in\mathcal{B}(a_1,\dots,a_k)$ with  $|m|\le \mathsf d^*(G)$ such that   $m_i=g_i$ for some $i\in\{1,\dots,k\}$. 
 \item[(b)] There does not exist a pair  $m',m''\in \mathcal{B}(a_1,\dots,a_k)$ with $|m'|\le \mathsf d^*(G)$, $|m''|\le \mathsf d^*(G)$, such that $m'_i=2$ and  $m''_i=3$ for some 
 $i\in\{1,\dots,k\}$. 
 \end{itemize}
Then $2=n_{s+1}=\dots =n_{r}$ where $r=2s$ or $r=2s-1$.  
 \end{lemma}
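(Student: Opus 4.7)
First, I would argue that (a) forces the $a_i$ to be nonzero and pairwise distinct: if $a_i = 0$ then $g_i = 1$ and the singleton $e_i \in \mathcal{B}$ of length $1 \le \mathsf d^*(G)$ violates (a); if $a_i = a_j$ with $j \ne i$, then again $g_i = 1$, and $e_i + (\mathrm{ord}_G(a_j) - 1) e_j \in \mathcal{B}$ has length $\mathrm{ord}_G(a_j) \le n_1 \le \mathsf d^*(G)$ (using that $G$ is non-cyclic), again violating (a).

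Next, for each fixed $i$ I would reorder so that $a_i$ is last and apply Lemma~\ref{lemma:distinct}: its hypothesis is precisely (a) restricted to this index, and the non-cyclicity of $G$ rules out the case $k = 1$ of its conclusion. We therefore obtain $k = r + 1$, that $\{a_j : j \ne i\}$ generates $G$, that the multiset of ``orders modulo previous'' in the reordered sequence equals $\{n_1,\dots,n_r\}$, $g_i = 1$, and the deficit-zero identity $-a_i = \sum_{j \ne i}(d_j^\sigma - 1) a_j$ in $G$. Since the hypothesis of Lemma~\ref{lemma:distinct} is insensitive to how the first $k - 1$ entries are ordered, this identity in fact holds for every ordering $\sigma$ of $\{a_j : j \ne i\}$; letting $a_j$ be first in $\sigma$ shows in particular that $\mathrm{ord}_G(a_j) - 1$ appears as the coefficient of $a_j$ in some deficit-zero representation of $-a_i$.

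With this structure in hand, I would analyse the minimum lengths $L_i(c) := \min\{|m| : m \in \mathcal{B}(a_1,\dots,a_k),\; m_i = c\}$. The deficit-zero identity gives $L_i(1) = \mathsf d^*(G) + 1$. To bound $L_i(2)$ and $L_i(3)$ I would multiply this identity by $2$ or $3$ and then reduce each coefficient modulo $\mathrm{ord}_G(a_j)$ (using $\mathrm{ord}_G(a_j)\cdot a_j = 0$ in $G$); varying the ordering $\sigma$ yields a family of such upper bounds expressed in terms of the invariant factors $n_l$ and, in particular, in terms of the number $s$ of factors with $n_l \ge 3$.

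Finally I would invoke (b): for every $i$, at least one of $L_i(2) > \mathsf d^*(G)$ and $L_i(3) > \mathsf d^*(G)$ must hold. Combining this with the upper bounds from the previous step yields numerical inequalities that force $s \le \lceil r/2 \rceil$, which is equivalent to the claim $n_{s+1} = \cdots = n_r = 2$ with $r = 2s$ or $r = 2s - 1$. The main obstacle is the bookkeeping when $\{a_j : j \ne i\}$ is not a direct-sum basis of $G$ (i.e.\ when some $\mathrm{ord}_G(a_j)$ strictly exceeds the corresponding $d_j^\sigma$): in that case the mod-$\mathrm{ord}_G(a_j)$ reductions are weaker, and one must combine deficit-zero relations from several orderings $\sigma$ in order to obtain bounds sharp enough to yield the correct threshold between $r = 2s$ and $r = 2s - 1$.
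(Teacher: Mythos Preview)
Your opening is fine and matches the paper: the reduction to distinct nonzero $a_i$ via (a), and the application of Lemma~\ref{lemma:distinct} at each index to get $k=r+1$, $g_i=1$, and the deficit-zero identity, are exactly what is needed.

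The gap is in the second half. Your proposed mechanism---multiply the deficit-zero identity by $2$ or $3$ and ``reduce each coefficient modulo $\mathrm{ord}_G(a_j)$''---does not produce an element of $\mathcal{B}(a_1,\dots,a_k)$ in general: subtracting $\mathrm{ord}_G(a_j)e_j$ is legitimate, but you cannot control how many coordinates admit such a reduction, and the resulting length bounds you would get this way are not strong enough to force $s\le\lceil r/2\rceil$. You flag this yourself as the ``main obstacle'' and propose to fix it by combining several orderings $\sigma$, but you do not say how; as written, this is not a proof but a hope.

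What the paper actually does at this point is quite different and is the real content of the lemma. It works with the \emph{normal form} of $-2a_k$ and $-3a_k$ with respect to $a_1,\dots,a_{k-1}$ (coefficients reduced modulo $d_j$, with carries, not modulo $\mathrm{ord}_G(a_j)$), and tracks their deficits. One shows the deficit of $-2a_k$ is at least $2$ (it cannot be $0$ or $1$ because $2a_k\neq a_k$ and $a_k\notin\{a_1,\dots,a_{k-1}\}$), so there \emph{is} an $m'$ with $m'_k=2$ and $|m'|\le\mathsf d^*(G)$. Then (b) forces the deficit of $-3a_k$ to be at most $2$; ruling out $0$ and $1$ (the latter would make $2a_k$ equal to some $a_j$, collapsing the rank) leaves deficit exactly $2$. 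A short case analysis of deficit-$2$ normal forms, using the deficit-zero property at a \emph{different} index to eliminate the case $2a_k=a_{j_1}+a_{j_2}$, yields $2a_k=2a_j$ for some $j\neq k$. This pairing claim is the key step your plan is missing: it says the nonzero elements among $2a_1,\dots,2a_{r+1}$ come in pairs, so $\mathrm{rank}(2G)\le(r+1)/2$, and since $\mathrm{rank}(2G)=|\{i:n_i>2\}|$ the conclusion follows immediately. Your $L_i(c)$ framework does not reach this pairing, and without it there is no visible route from ``for each $i$, one of $L_i(2),L_i(3)$ exceeds $\mathsf d^*(G)$'' to a bound on the number of $n_l>2$.
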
  
 \begin{proof}
No element in $\{a_1,\dots,a_k\}$ is zero, since $a_i=0$ implies  $g_i=1$, and $e_i\in\mathcal{B}(a_1,\dots,a_k)$ where $e_i$ is the $i$th standard basis vector in $\mz^k$, hence (a) implies $\mathsf d^*(G)<1$,  a contradiction. The elements $a_1,\dots,a_k$ are distinct. Indeed, assume to the contrary that  say $a_1=a_2$.  Then denoting by $d$ the order of $a_1$, we have that $g_1=1$ and $m=(1,d-1,0,\dots,0)\in\mathcal{B}(a_1,\dots,a_k)$, hence by (a) we have $\mathsf d^*(G)<d$, a contradiction (recall that $G$ is not cyclic). 

Thus $a_1,\dots,a_k$ are distinct non-zero elements of $G$, and Lemma~\ref{lemma:distinct} applies for them with an arbitrary ordering of the elements in the sequence. In particular, by condition (a) and Lemma~\ref{lemma:distinct} the rank of $G$ is $k-1$, and any $k-1$ of the elements $a_1,\dots,a_k$ generate $G$. Furthermore, after an arbitrary renumbering of the elements in the set $\{a_1,\dots,a_k\}$,  the deficit of $-a_k$ with respect to 
$a_1,\dots,a_{k-1}$ is $0$, and the multiset $\{d_1,\dots,d_{k-1}\}$ coincides with $\{n_1,\dots,n_r\}$, where $d_i$ stands for the order of $a_i$ modulo
 $\langle a_1,\dots,a_{i-1}\rangle$. 
  
We claim that for any $i\in\{1,\dots,k\}$ with $2a_i\neq 0$ there exists a $j\neq i$ such that $2a_i=2a_j$. Indeed, suppose for example that $2a_k\neq 0$. 
Recall that the deficit of $-a_k$ with respect to $a_1,\dots,a_{k-1}$ is zero, so 
\begin{equation}\label{eq:deficit}
-a_k=\sum_{i=1}^{k-1}(d_i-1)a_i.\end{equation} 
Since $-a_k\neq -2a_k$, the deficit of $-2a_k$ is different from the deficit of $-a_k$, so the deficit of $-2a_k$ is non-zero. 
Also $a_k$ is different from each of $a_1,\dots,a_{k-1}$, implying that the deficit of $-2a_k$ is not $1$. Consequently, the deficit of $-2a_k$ is at least $2$, hence 
$m':=(l_1,\dots,l_{k-1},2)$ where 
$-2a_k=l_1a_1+\dots+l_{k-1}a_{k-1}$ is the normal form of $-2a_k$ satisfies $m'_k=2$ and $|m'|=2+l_1+\dots+l_{k-1}\le \mathsf d^*(G)$. 
It follows by assumption (b) that the deficit of $-3a_k$ is at most  $2$. It can not be $0$, the deficit of $-a_k$, since $2a_k\neq 0$, and it can not be $1$, otherwise $2a_k$ coincides with one of 
$a_1,\dots,a_{k-1}$, say $2a_k=a_2$, and therefore $G=\langle a_2,a_3,\dots,a_k\rangle=\langle a_3,\dots,a_k\rangle$ is generated by $k-2=r-1$ elements, a contradiction. 
Thus the deficit of $-3a_k$ is $2$. 
There are two possible cases: $-3a_k=(d_1-2)a_1+(d_2-2)a_2+\sum_{i=3}^{k-1}(d_i-1)a_i$ or $-3a_3=(d_1-3)a_1+(d_2-1)a_2+\sum_{i=3}^{k-1}(d_i-1)a_i$ (with a suitable ordering of $a_1,\dots,a_{k-1}$). 
Comparing this with \eqref{eq:deficit} in the first case we deduce $2a_k=a_1+a_2$, hence $-a_1=-2a_k+a_2$. The latter equality shows that 
the deficit of $-a_1$ with respect to $a_k,a_2,a_3,\dots,a_{k-1}$ can not be zero, a contradiction. Thus this case does not occur. The only remaining possibility is that 
$-3a_k=(d_1-3)a_1+\sum_{i=2}^{k-1}(d_i-1)a_i$ (with a suitable ordering of $a_1,\dots,a_{k-1}$). Comparing this with \eqref{eq:deficit} we conclude 
$2a_k=2a_1$. So the claim is proved. 

It follows from the above claim that the set $\{2a_1,\dots,2a_k\}$ contains at most $\frac k2$ non-zero elements, hence the rank of the group $\langle 2a_1,\dots,2a_k\rangle 
=\{2a\colon a\in G\}$ is at most $\frac{r+1}2$.  On the other hand the rank of $\{2a\colon a\in G\}$ equals $|\{i\in\{1,\dots,r\}\colon n_i>2\}|$. Consequently we have 
$|\{j\in \{1,\dots,r\}\colon n_j=2\}|\ge \frac{r-1}2$. 
 \end{proof} 
 
  \begin{proposition}\label{prop:example} 
 Suppose that  $G=C_{n_1}\oplus \dots\oplus C_{n_s}\oplus C_2\oplus\dots\oplus C_2$ where $r=2s-1$ or $r=2s$, so  $2=n_{s+1}=\dots =n_r$, and 
 $2\mid n_s\mid n_{s-1}\mid\dots\mid n_1$. 
 Denote by $e_1,\dots,e_s,f_1,\dots,f_{r-s}$ the generators of the direct factors of $G$, thus the order of $e_i$ is $n_i$ for $i=1,\dots,s$, and the order of $f_j$ is $2$ for $j=1,\dots,r-s$. 
 Set $a_1=e_1$, $a_{2i}=e_i+f_i$ and $a_{2i+1}=f_i+e_{i+1}$ for $i=1,\dots,s-1$, and $a_{2s}=e_s$ if $r=2s-1$ whereas $a_{2s}=e_s+f_s$, $a_{2s+1}=f_s$ if $r=2s$. 
 Then  the abelian group $\mathcal{G}(a_1,\dots,a_{r+1})$ is not generated by $\{m\in \mathcal{B}(a_1,\dots,a_{r+1})\colon |m|\le \mathsf d^*(G)\}$. 
 \end{proposition}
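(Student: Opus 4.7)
The approach is to exhibit an element $v \in \mathcal{G}(a_1,\dots,a_{r+1})$ together with a group homomorphism $\phi \colon \mathcal{G}(a_1,\dots,a_{r+1}) \to \mz/2\mz$ that vanishes on every $m \in \mathcal{B}(a_1,\dots,a_{r+1})$ with $|m| \le \mathsf d^*(G)$, while $\phi(v)\neq 0$. Such a pair $(v,\phi)$ separates $v$ from the subgroup generated by the short zero-sum sequences and thus proves the non-generation statement.

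The natural candidate is the alternating vector $v := ((-1)^i)_{i=1}^{r+1}$. Thanks to the chain structure of the sequence, each $e_i$ occurs in exactly the two consecutive positions $a_{2i-1}, a_{2i}$ and each $f_j$ in exactly the two consecutive positions $a_{2j}, a_{2j+1}$, so the alternating sum $\sum_{i=1}^{r+1}(-1)^i a_i$ telescopes to $0$ in both cases $r=2s-1$ and $r=2s$. Hence $v \in \mathcal{G}(a_1,\dots,a_{r+1})$.

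To construct $\phi$, I would write the condition $\sum_i m_i a_i = 0$ coordinate by coordinate in the decomposition $G = \bigoplus_{i=1}^s\langle e_i\rangle \oplus \bigoplus_{j=1}^{r-s}\langle f_j\rangle$. The previous paragraph shows that the coefficient of $e_i$ in $\sum_j m_j a_j$ equals $m_{2i-1}+m_{2i}$ and that of $f_j$ equals $m_{2j}+m_{2j+1}$, so every $m \in \mathcal{G}(a_1,\dots,a_{r+1})$ satisfies $n_i \mid m_{2i-1}+m_{2i}$ and $2 \mid m_{2j}+m_{2j+1}$. Since each $n_i$ is even by hypothesis, these congruences reduced modulo $2$ force $m_1 \equiv m_2 \equiv \dots \equiv m_{r+1} \pmod 2$. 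Thus $\phi(m) := m_1 \bmod 2$ is a well-defined homomorphism on $\mathcal{G}(a_1,\dots,a_{r+1})$, and plainly $\phi(v) = 1$.

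It remains to check that $\phi$ vanishes on short elements of $\mathcal{B}$; this is the only step that requires explicit computation. If $m \in \mathcal{B}(a_1,\dots,a_{r+1})$ had $\phi(m) = 1$, every $m_i$ would be odd, hence at least $1$. The divisibility $n_i \mid m_{2i-1}+m_{2i}$ with $m_{2i-1}+m_{2i}$ a positive even integer then yields $m_{2i-1}+m_{2i} \geq n_i$, and summing over $i=1,\dots,s$ gives $\sum_{i=1}^{2s} m_i \geq \sum_{i=1}^s n_i$. In the case $r = 2s-1$ this is already $|m| \ge \mathsf d^*(G)+1$, and in the case $r = 2s$ the extra summand $m_{2s+1} \ge 1$ gives the same bound, contradicting $|m| \le \mathsf d^*(G)$ in either case. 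Hence all short zero-sum sequences lie in $\ker\phi$, so the subgroup they generate cannot contain $v$, and the proposition follows. The main challenge is simply guessing the right $(v,\phi)$; once the parity structure imposed by $2\mid n_i$ is noticed, the remaining calculations are elementary.
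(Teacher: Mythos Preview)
Your proof is correct and follows essentially the same approach as the paper: both arguments hinge on the observation that for any $m\in\mathcal{G}(a_1,\dots,a_{r+1})$ all coordinates $m_i$ share the same parity, and that a nonnegative $m$ with all $m_i$ odd must satisfy $|m|\ge \mathsf d^*(G)+1$. The only cosmetic differences are that the paper exhibits an abstract $u\in\mathcal{B}$ with $u_1=1$ as its witness (rather than your explicit alternating $v\in\mathcal{G}$) and invokes minimality of $|m|$ to pin down $m_{2i-1}+m_{2i}=n_i$, whereas your inequality $m_{2i-1}+m_{2i}\ge n_i$ suffices and is slightly cleaner.
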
 
 \begin{proof} The element $a_1$ is contained in the group $\langle a_2,\dots,a_{r+1}\rangle$, whence there exists an element $u\in \mathcal{B}(a_1,\dots,a_{r+1})$ with $u_1=1$. 
 On the other hand we shall show that for any $m\in \mathcal{B}(a_1,\dots,a_{r+1})$ with  $|m|\le \mathsf d^*(G)$ we have that $m_1$ is even, and consequently $u$ is not contained in the subgroup of $\mathcal{G}(a_1,\dots,a_{r+1})$ generated by   
 $\{m\in \mathcal{B}(a_1,\dots,a_{r+1})\colon |m|\le \mathsf d^*(G)\}$. 
 Indeed, take $m=(m_1,\dots,m_{r+1})\in \mathcal{B}(a_1,\dots,a_{r+1})$ with 
 $m_1$ odd and $|m|$ minimal possible. 
 Consider the case when $r=2s-1$, and so $\mathsf d^*(G)=n_1+\dots+n_s-1$. The order of $a_{2i-1}$ and $a_{2i}$ is $n_i$, hence $0\le m_{2i},m_{2i-1}\le n_i-1$ hold for $i=1,\dots,s$. 
 We have 
\[0=\sum_{i=1}^{2s} m_ia_i=\sum_{i=1}^s (m_{2i-1}+m_{2i})e_i+\sum_{i=1}^{s-1}(m_{2i}+m_{2i+1})f_i.\] 
From the coefficient of $e_1$ above we deduce that $m_1+m_2=n_1$, hence $m_2$ is odd. From the coefficient of $f_1$ above we infer that $m_3$ is odd as well. 
From the coefficient of $e_2$ above we deduce that $m_3+m_4=n_2$, and consequently $m_4$ is odd. 
Continuing in the same way and looking at step-by-step  the coefficient of $f_2,e_3,f_3,e_4,\dots,f_{s-1},e_s$ we arrive at the conclusion that 
$m_{2i-1}+m_{2i}=n_i$ for all $i=1,\dots,s$, whence 
$|m|=\sum_{i=1}^s(m_{2i-1}+m_{2i})=\sum_{i=1}^s n_i>(\sum_{i=1}^s n_i)-1=\mathsf d^*(G)$.  
The case when $r=2s$ is similar. 
 \end{proof}

 \begin{proposition}\label{prop:generators} 
 Let $a_1,\dots,a_k$ be a sequence of elements of $G$. 
 \begin{itemize} 
 \item[(i)] The abelian group $\mathcal{G}(a_1,\dots,a_k)$ is generated by $\{m\in \mathcal{B}(a_1,\dots,a_k)\colon |m|\le \mathsf d^*(G)+1\}$. 
 \item[(ii)] If $r>1$ and $n_{s+1}\neq 2$ where $r=2s$ or $r=2s-1$, then 
 $\mathcal{G}(a_1,\dots,a_k)$ is generated by $\{m\in \mathcal{B}(a_1,\dots,a_k)\colon |m|\le \mathsf d^*(G)\}$. 
\end{itemize}
 \end{proposition}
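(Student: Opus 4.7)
The plan is to prove both parts by induction on the length $k$ of the sequence, reducing at each step to a shorter sequence by ``killing'' one coordinate via an appropriate short element of $\mathcal{B}$. The common ingredient is a divisibility observation: for any $m=(m_1,\dots,m_k)\in \mathcal{G}(a_1,\dots,a_k)$ and any index $i$, the relation $m_ia_i=-\sum_{j\neq i}m_ja_j$ places the left-hand side in $\langle a_j:j\neq i\rangle$, so if $g_i$ denotes the order of $a_i$ modulo that subgroup then $g_i\mid m_i$. The analogous assertion with $d_k$ in place of $g_k$ (i.e.\ the order of $a_k$ modulo $\langle a_1,\dots,a_{k-1}\rangle$) follows by the same reasoning.

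For part (i), the base case $k=0$ is vacuous. In the inductive step, Lemma~\ref{lemma:d+1} supplies $b\in \mathcal{B}(a_1,\dots,a_k)$ with $b_k=d_k$ and $|b|\le \mathsf{d}^*(G)+1$. Given $m\in \mathcal{G}(a_1,\dots,a_k)$, divisibility gives $m_k=qd_k$ for some $q\in \mz$; then $m-qb$ has vanishing $k$th coordinate and thus is the natural image of an element of $\mathcal{G}(a_1,\dots,a_{k-1})$. By the inductive hypothesis it is a $\mz$-linear combination of elements of $\mathcal{B}(a_1,\dots,a_{k-1})\subset \mathcal{B}(a_1,\dots,a_k)$ of length at most $\mathsf{d}^*(G)+1$, and adding back $qb$ completes the step.

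For part (ii) one invokes Lemma~\ref{lemma:23} instead. The hypotheses $r>1$ (so $G$ is non-cyclic) and $n_{s+1}\neq 2$ contradict the conclusion of Lemma~\ref{lemma:23} for $G$, so for the given sequence $a_1,\dots,a_k$ at least one of the non-existence conditions (a), (b) of that lemma must fail. If (a) fails, there is $b\in \mathcal{B}$ of length $\le \mathsf{d}^*(G)$ with $b_i=g_i$ for some $i$, and the reduction $m\mapsto m-(m_i/g_i)b$ kills the $i$th coordinate, placing the result in the natural image of $\mathcal{G}(a_1,\dots,\widehat{a_i},\dots,a_k)$, to which the inductive hypothesis on $k$ applies. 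If instead (b) fails, there are $b',b''\in \mathcal{B}$ of length $\le \mathsf{d}^*(G)$ with $b'_i=2$ and $b''_i=3$ for some $i$; since $\gcd(2,3)=1$ we may choose $\alpha,\beta\in \mz$ with $2\alpha+3\beta=m_i$, and then $m-\alpha b'-\beta b''$ has vanishing $i$th coordinate, so induction concludes the argument as above.

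The bulk of the technical content has already been isolated in Lemmas~\ref{lemma:d+1} and \ref{lemma:23}, leaving here a straightforward Euclidean-type reduction. The one mildly delicate point is Case (b) of part (ii), where no single short element of $\mathcal{B}$ is available to kill one coordinate; but two elements of $\mathcal{B}$ whose $i$th coordinates are coprime still suffice for generating the abelian group $\mathcal{G}$ by $\mz$-combinations, which is all that is required.
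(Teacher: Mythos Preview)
Your proof is correct and follows essentially the same approach as the paper: reduce on $k$ by killing one coordinate at a time, using Lemma~\ref{lemma:d+1} for part (i) and the contrapositive of Lemma~\ref{lemma:23} for part (ii). The only cosmetic difference is that in case (b) the paper first observes that necessarily $g_i=1$ and then subtracts a multiple of the single element $m''-m'$ (whose $i$th coordinate is $1$), whereas you invoke B\'ezout for $\gcd(2,3)=1$; the two variants are equivalent.
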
 
 
 \begin{proof}  (i) Take an arbitrary $u\in\mathcal{G}(a_1,\dots,a_k)$. Since $u_ka_k=-\sum_{i=1}^{k-1}u_ia_i$ belongs  $\langle a_1,\dots,a_{k-1}\rangle$, there exists an integer $l_1$ such that $u_k=l_1d_k$, where $d_k$ is the order $a_k$ modulo $\langle a_1,\dots,a_{k-1}\rangle$. By Lemma~\ref{lemma:d+1} there exists an $m^{(1)}\in \mathcal{B}(a_1,\dots,a_k)$ with $m^{(1)}_k=d_k$ and $|m^{(1)}|\le \mathsf d^*(G)+1$. Set $u':=u-l_1m^{(1)}$. Then $u'_k=0$, so $u'$ belongs to 
 $\mathcal{G}(a_1,\dots,a_{k-1})$ identified with  the subset $\{m\in\mathcal{G}(a_1,\dots,a_k)\colon m_k=0\}$ in $\mathcal{G}(a_1,\dots,a_k)$. 
 Repeat the same step for $u'$ to obtain $l_2\in \mz$ and $m^{(2)}\in \mathcal{B}(a_1,\dots,a_{k-1})$ such that $|m^{(2)}|\le \mathsf d^*(G)+1$ and 
 $u'-l_2m^{(2)}\in\mathcal{G}(a_1,\dots,a_{k-2})$. Continue in the same way, eventually we get that 
 \[u=\sum_{i=1}^kl_im^{(i)}\mbox{ where }m^{(i)}\in \mathcal{B}(a_1,\dots,a_i),\quad |m^{(i)}|\le \mathsf d^*(G)+1\mbox{ for } i\in \{1,\dots,k\}.\] 
 
 (ii) We slightly adjust the poof of (i). By our assumption on $G$, it follows from Lemma~\ref{lemma:23} that after a possible reordering of the elements $a_1,\dots,a_k$ 
and  denoting by $g_k$ the order of $a_k$ modulo $\langle a_1,\dots,a_{k-1}\rangle$ at least one of the following two possibilities holds:  
 \begin{itemize} 
 \item[(a)] there is an $m \in \mathcal{B}(a_1,\dots,a_k)$ with $|m|\le \mathsf d^*(G)$ and $m_k=g_k$; 
 \item[ (b)]  there are $m',m''\in  \mathcal{B}(a_1,\dots,a_k)$ with $|m'|,|m''|\le \mathsf d^*(G)$ and $m'_k=2$, $m''_k=3$. 
 \end{itemize} 
 Now take an arbitrary $u\in \mathcal{G}(a_1,\dots,a_k)$.  We have  $u_k=lg_k$ for some $l\in \mz$. 
 Set $u':=u-lm$ if (a) holds and $u':=u-l(m''-m')$ if (b) holds (note that in this case necessaily $g_k=1$). 
 Then $u'$ belongs to $\mathcal{G}(a_1,\dots,a_{k-1})$. Continue in the same way with the sequence  $a_1,\dots,a_{k-1}$ and $u'\in \mathcal{G}(a_1,\dots,a_{k-1})$. 
 In $k$ steps we get a presentation of $u$ as an integral linear combination of elements from  $\{m\in \mathcal{B}(a_1,\dots,a_k)\colon |m|\le \mathsf d^*(G)\}$. 
 \end{proof} 
 
 \begin{theorem}\label{thm:main} 
 For any finite abelian group $G$ we have the inequality 
 \[\sepbeta(G)\le \mathsf d^*(G)+1\] 
 with equality holding if and only if $G$ is cyclic or $2=n_{s+1}=\dots =n_r$ where $r=2s-1$ or $r=2s$. 
 \end{theorem}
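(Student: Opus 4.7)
My plan is to assemble Theorem~\ref{thm:main} directly from the machinery already developed in this section, using Corollary~\ref{cor:betasep} as the central translation device between $\sepbeta(G)$ and generation of $\mathcal{G}(a_1,\dots,a_s)$ by short elements of $\mathcal{B}(a_1,\dots,a_s)$ for sequences of distinct $a_i\in G$ with $s\le \kappa(G)=r+1$. The upper bound $\sepbeta(G)\le \mathsf d^*(G)+1$ is immediate: apply Proposition~\ref{prop:generators}(i) to any such sequence and quote Corollary~\ref{cor:betasep}.

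For the equality part I would split into three cases matching the statement. First, if $G$ is cyclic of order $n=n_1$, then $\mathsf d^*(G)+1=n$. Take $s=1\le \kappa(G)$ and let $a_1$ be a generator of $G$; then $\mathcal{B}(a_1)=n\mn_0$ and the smallest nonzero element has length $n$, while $\mathcal{G}(a_1)=n\mz$ forces any generating set of $\mathcal{G}(a_1)$ to contain such an element. By Corollary~\ref{cor:betasep} this gives $\sepbeta(G)\ge n=\mathsf d^*(G)+1$, hence equality.

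Second, if $G$ is non-cyclic and does not satisfy $2=n_{s+1}=\dots=n_r$ (where $r=2s-1$ or $r=2s$), then Proposition~\ref{prop:generators}(ii) applies to every sequence $a_1,\dots,a_k$ in $G$ and yields that $\mathcal{G}(a_1,\dots,a_k)$ is already generated by its length $\le\mathsf d^*(G)$ elements. Corollary~\ref{cor:betasep} then gives $\sepbeta(G)\le \mathsf d^*(G)$, so the inequality is strict. Third, if $G$ is non-cyclic and $2=n_{s+1}=\dots=n_r$, then Proposition~\ref{prop:example} furnishes a sequence of $r+1=\kappa(G)$ distinct elements $a_1,\dots,a_{r+1}\in G$ for which $\mathcal{G}(a_1,\dots,a_{r+1})$ is not generated by $\{m\in\mathcal{B}(a_1,\dots,a_{r+1}) : |m|\le \mathsf d^*(G)\}$. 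Since $r+1\le \kappa(G)$, Corollary~\ref{cor:betasep} forces $\sepbeta(G)\ge\mathsf d^*(G)+1$, and combined with the upper bound we get equality.

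The main obstacle has in fact been discharged already by the three key propositions of this section: Proposition~\ref{prop:generators}(i) for the upper bound, the refined generation statement Proposition~\ref{prop:generators}(ii) for the strict inequality when $G$ is not of the special form, and the explicit family of Proposition~\ref{prop:example} for the tightness. What remains in the proof of the theorem itself is only the bookkeeping of checking that each case falls under the hypotheses of Corollary~\ref{cor:betasep} (in particular that $s\le \kappa(G)$ in each instance, which is clear since $s\in\{1,r+1\}$), so the proof is essentially a case distinction followed by quotations.
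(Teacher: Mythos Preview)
Your proposal is correct and follows essentially the same approach as the paper: both derive the upper bound from Proposition~\ref{prop:generators}(i) and Corollary~\ref{cor:betasep}, the strict inequality in the generic case from Proposition~\ref{prop:generators}(ii), and the tightness in the special case from Proposition~\ref{prop:example}. The only cosmetic difference is that for cyclic $G$ the paper argues directly via a faithful $1$-dimensional module, whereas you appeal to Corollary~\ref{cor:betasep} with the single-element sequence $a_1$; both are trivially equivalent, and your observation that the $a_i$ in Proposition~\ref{prop:example} are distinct (needed for Corollary~\ref{cor:betasep}) is a point the paper leaves implicit.
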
 
 \begin{proof}  
 Proposition~\ref{prop:generators} (i) and Corollary~\ref{cor:betasep} imply the inequality $\sepbeta(G)\le \mathsf d^*(G)+1$. 
 Furthermore,  if $G$ is not cyclic and $n_{s+1}\neq 2$ where $r=2s$ or $r=2s-1$, then by   Proposition~\ref{prop:generators} (ii) and Corollary~\ref{cor:betasep} we even get the stronger inequality 
 $\sepbeta(G)\le \mathsf d^*(G)$. 
 
 For a cyclic group $G$ any faithful $1$-dimensional $G$-module $V$ gives $\sepbeta(G,V)=|G|=\mathsf d^*(G)+1$. 
 Suppose finally that $2=n_{s+1}=\dots =n_r$ where $r=2s-1$ or $r=2s$. By Proposition~\ref{prop:example} 
 and Corollary~\ref{cor:betasep} we conclude 
 $\sepbeta(G)> \mathsf d^*(G)$. Summarizing, for these groups $G$ we have the equality 
 $\sepbeta(G)=\mathsf d^*(G)+1$. 
  \end{proof} 
  
  \begin{corollary}\label{cor:strictinequality} 
  We have the strict inequality 
  \[\sepbeta(G)<\beta(G)\] 
  for any non-cyclic finite abelian group $G$ with $n_{s+1}\neq 2$, where $r=2s-1$ or $r=2s$. 
  \end{corollary}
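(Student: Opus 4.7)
The plan is that this corollary is essentially an immediate assembly of three facts already in place, so there is no real obstacle; the only task is to chain the inequalities in the correct direction.

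First, I would invoke Theorem~\ref{thm:main}. Under the hypotheses ($G$ non-cyclic and $n_{s+1}\neq 2$ with $r=2s$ or $r=2s-1$), the equality case in the theorem does not occur, so the proof of Theorem~\ref{thm:main} actually yields the stronger inequality
\[\sepbeta(G)\le \mathsf d^*(G),\]
obtained by combining Proposition~\ref{prop:generators}(ii) with Corollary~\ref{cor:betasep}.

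Next, I would bound $\beta(G)$ from below. Equation \eqref{eq:noether=davenport} identifies $\beta(G)$ with the Davenport constant $\mathsf D(G)$, and the standard lower bound \eqref{eq:lowerboundfordavenport} gives
\[\beta(G)=\mathsf D(G)\ge \mathsf d^*(G)+1.\]

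Putting the two chains together,
\[\sepbeta(G)\le \mathsf d^*(G) < \mathsf d^*(G)+1\le \mathsf D(G)=\beta(G),\]
which is the asserted strict inequality. Since every step has already been proved in the paper, nothing further is required.
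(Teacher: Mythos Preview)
Your proposal is correct and follows essentially the same approach as the paper: the paper's own proof is exactly the chain $\sepbeta(G)\le \mathsf d^*(G)<\mathsf d^*(G)+1\le \mathsf D(G)=\beta(G)$, citing Theorem~\ref{thm:main} together with \eqref{eq:lowerboundfordavenport} and \eqref{eq:noether=davenport}. Your only difference is cosmetic---you unpack the first step by pointing back to Proposition~\ref{prop:generators}(ii) and Corollary~\ref{cor:betasep}, whereas the paper simply invokes the equality characterization in the statement of Theorem~\ref{thm:main}.
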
 
  \begin{proof} Theorem~\ref{thm:main}  for a non-cyclic $G$ satisfying  $n_{s+1}\neq 2$ together with \eqref{eq:lowerboundfordavenport} and \eqref{eq:noether=davenport} yields the inequalities 
  \[\sepbeta(G)\le \mathsf d^*(G)<\mathsf d^*(G)+1\le \mathsf D(G)=\beta(G).\] 
  \end{proof} 
  
  \begin{remark}
  Since for a finite abelian group $G$ with $n_{s+1}=\dots=n_r=2$ we have $\beta_{sep}(G)=\mathsf d^*(G)+1$ by Theorem~\ref{thm:main}, therefore for such a group we have
$\beta_{sep}(G)<\beta(G)$ if and only if we have the strict inequality $\mathsf d^*(G)+1<\mathsf D(G)$. 
A complete description of the groups $G$ with $n_{s+1}=\dots=n_r=2$ and $\mathsf D(G)>\mathsf d^*(G)+1$ is not known.   
On the other hand there are infinitely many known examples of groups $G$ where $n_{s+1}=\dots=n_r=2$ both with equality $\mathsf d^*(G)+1=\mathsf D(G)$ and with strict inequality $\mathsf d^*(G)+1<\mathsf D(G)$, 
see for example Corollary 2 in \cite{geroldinger-schneider} or Corollary 4.2.3 in \cite{Ge09a}. 
  \end{remark} 
  
  \begin{center} Acknowledgement \end{center} 
  
  I thank Alfred Geroldinger for helpful comments on the manuscript. 
  

\end{document}